\makeatletter \theoremstyle{plain}
 \newtheorem{thm}{Theorem}[section]
 \newtheorem{lem}[thm]{Lemma}
 \newtheorem{prop}[thm]{Proposition}
 \newtheorem{corollary}[thm]{Corollary}
 \numberwithin{equation}{section} 
 \numberwithin{figure}{section} 
 \theoremstyle{plain}
 \theoremstyle{definition}
 \newtheorem{defn}[thm]{Definition}
\newcommand{\A}{{{\mathcal A}}}
\newcommand{\calS}{{{\mathcal S}}}
\newcommand{\calC}{{{\mathcal C}}}
\newcommand{\fH}{{{\mathfrak H}}}
\newcommand{\fM}{{{\mathfrak M}}}
\newcommand{\calL}{{{\mathcal L}}}
\newcommand{\E}{{{\mathcal S}}}
\newcommand{\C}{{{\mathbb C}}}
\newcommand{\R}{{{\mathbb R}}}
\newcommand{\J}{{{\mathbb J}}}
\begin{document}

\title [Modulus  of surface families - Radial stretch]{Modulus of surface families  and the radial stretch in the heisenberg group}

\author{Ioannis D. Platis}

\address {Department of Mathematics and Applied Mathematics,
University of Crete, 
University Campus,
GR 70013 Heraklion Crete,
 Greece.}

\email{jplatis@math.uoc.gr}
\keywords{Heisenberg group, extremal quasiconformal mappings, mean distortion.\\
{\it 2010 Mathematics Subject Classification:} 30L10, 30C75.}

\begin{abstract}
We develop a modulus method for surface families inside a domain in the Heisenberg group and we prove that  the stretch map between two Heisenberg spherical rings   is a minimiser for the mean distortion  among the class of contact quasiconformal maps between these rings which satisfy certain boundary conditions.

\end{abstract}

\maketitle

\section{Introduction and Statement of Results}

In the classical theory of quasiconformal mappings of the complex plane $\C$, various tools have been developed for the solution of the so called extremal problems: given a family $\mathcal{F}$ of quasiconformal  mappings $f$ between two given regions $\Omega$ and $\Omega'$ of $\C$ subject to boundary conditions, find a minimiser of the {\it maximal distortion} 
$$
K_f=\sup_{p\in \Omega}K_f(p),\quad K_f(p)=\frac{|f_z(p)|+|f_{\overline{z}}(p)|}{|f_z(p)|-|f_{\overline{z}}(p)|},\quad f\in\mathcal{F},
$$ 
or a minimiser of the {\it mean distortion} 
$$
\fM(f,\rho_0)=\frac{\iint_\Omega K_f(p)\rho_0^2(p)d\calL^2(p)}{\iint_\Omega \rho_0^2(p)d\calL^2(p)},\quad f\in\mathcal{F},
$$
where $\rho_0:\Omega\to\R^+$ is a positive Borel function related to the geometry of $\Omega$. 
 That is, in the first case, we wish to find an $f_0\in\mathcal{F}$ so that $K_{f_0}\le K_f$ for every $f\in\mathcal{F}$, whereas in the second case our minimiser has to satisfy $\fM(f_0,\rho_0)\le\fM(f,\rho_0)$  for every $f\in\mathcal{F}$.  Motivation to solve problems like these comes from various sources, we mention for instance theory of elasticity. The study of the first problem goes back to Gr\"otzsch, \cite{Gr},  Teichm\"uller, \cite{T},  Strebel, \cite{S}, to mention only a few. Modulus methods for curve families  and quadratic differentials theory constitute classical tools for the solution of this problem. The second problem is more difficult; as it was shown in \cite{Ma} there are examples where  a minimiser for the mean distortion does not exist. Nevertheless, several  other methods have been deployed for the study of this problem, see for instance \cite{AIMO}, \cite{BFP2}, \cite{GM}, \cite{Ma}. For an overview of the section of Teichm\"uller theory concerning extremal problems, in particular 
quadratic differentials theory, we refer the reader to the books of Strebel \cite{S1} and Gardiner and Lakic, \cite{Ga}; for modulus methods  we refer to the book of Vasil'ev, \cite{Va}.

In contrast to the classical theory where a rather rich machinery is available for the solution of minimisation problems of the aforementioned nature, in the Heisenberg group (or, more generally, in the sub-Riemannian) case the picture is not analogous. However, the combination of the powerful Kor\'anyi-Reimann theory of quasiconformal mappings of the Heisenberg group with moduli methods seem to yield interesting results. In this paper is to propose a method of modulus of surface families inside the Heisenberg group in order to minimise a mean distortion functional; we describe this method below.   

For the moment, we recall in brief a few facts about the Heisenberg group $\fH$; for details, see Section \ref{sec:prel}.  $\fH$ is $\C\times\R$ with group law
$$
(z,t)*(w,s)=(z+w,t+s+2\Im(\overline{w}z))
$$
and it admits a natural left invariant metric (the {\it Heisenberg distance}) defined by
\begin{equation*}
d_\fH(p,q)=\|p^{-1}*q\|,\quad p=(z,t),q=(w,s)\in\fH,
\end{equation*}
where $\|(z,t)\|=\left||z|^2-it\right|^{1/2}$.
The general metric definition of a quasiconformal mapping due to Mostow, \cite{M},  applies for  the Heisenberg group $\fH$ when it is  considered as a metric space with metric $d_\fH$.

\medskip

\begin{defn}\label{defn:metric}{\bf (Metric definition)}
A homeomorphism $f:\Omega\to\Omega'$ between domains $\Omega$ and $\Omega'$ in $\fH$ is quasiconformal if
$$
{\rm ess\; sup}_{p\in\Omega}H(p)<\infty,
$$
where
$$
H(p)=\limsup_{r\to 0}\frac{\max_{d_\fH(p,q)=r}d_\fH(f(p),f(q))}{\min_{d_\fH(p,q)=r}d_\fH(f(p),f(q))}.
$$
It is called $K$-{\it quasiconformal} if there is a constant $K\ge 1$ such that
$$
{\rm ess\;sup}_{p\in\Omega}H(p)\le K.
$$
\end{defn}

\medskip

In analogy to the complex case, there are equivalent analytic and geometric definitions, see Section \ref{sec:quas} for details. But in the Heisenberg group case there exists an additional condition for  $K$-quasiconformal mappings $f$. Namely, such an $f$ has to preserve the contact form $$\omega=dt+2\Im(\overline{z}dz)$$ of $\fH$, i.e., $f^*\omega=\lambda\omega$ a.e., for some non-vanishing function $\lambda$. Moreover, if $f=(f1,f_2,f_3)$ then the distributional derivatives $Zf_I$ and ${Z}f_I$
exist a.e. Where, $f_I=f_1+if_2$ and $Z,\;\overline{Z}$ are the {\it horizontal} vector fields
$$
Z=\frac{\partial}{\partial z}+i\overline{z}\frac{\partial}{\partial t},\quad Z=\frac{\partial}{\partial \overline{z}}-iz\frac{\partial}{\partial t}.
$$
If $f$ is orientation-preserving, then it satisfies a.e. the following Beltrami system of equations:
$$
\overline{Z}f_I=\mu_f Zf_I \quad\text{and}\quad \overline{Z}f_{II}=\mu_f Zf_{II}.
$$
Here, $f_{II}=f_3+i|f_I|^2$ and the Beltrami coefficient $\mu_f$  of $f$ is a measurable complex function which is  essentially bounded by a constant $k\in[0,1)$, i.e., $\|\mu_f\|_\infty=k$. For an {\it arbitrary} orientation-preserving homeomorphism $f$ such that $Zf_I$ and $\overline{Z}f_I$ exist a.e., we consider the Beltrami coefficient, the distortion function of $f$ and the the maximal distortion of $f$ which are given respectively by
$$
\mu_f(z,t)=\frac{\overline{Z}f_I(z,t)}{Zf_I(z,t)},\quad K_f(z,t)=\frac{|\mu_f(z,t)|+1}{|\mu_f(z,t)|-1},\quad K_f={\rm ess}\sup_{(z,t)}K_f(z,t).
$$
For a quasiconformal $f$ we have $\|\mu_f\|_\infty=k<1$ and thus $1\le K_f=\frac{1+k}{1-k}<\infty$. In this paper we consider $\calC^2$ orientation-preserving contact transformations such that $1\le K_f<\infty$. These have to be quasiconformal, see Section \ref{sec:quas}.
 
\medskip

As in the case of the complex plane, there are two kinds of extremal problems we are interested in: first, we seek for minimisers for the maximal distortion $K_f$ 
 among a class $\mathcal{F}$ of sense preserving quasiconformal maps $f:\Omega\to\Omega'$ between domains of $\fH$ which satisfy certain boundary conditions. Standard family arguments and appropriate conditions on $\mathcal{F}$ ensure the existence of a minimiser for the maximal distortion, see Theorem F in \cite{KR1}; however, there is no standard method to detect such a minimiser. 
For the second case,  it turns out that there are two kinds of mean distortions that arise naturally in the Heisenberg group case. First, we have  the {\it 2-mean distortion} $\fM_2(f,\rho)$ of a quasiconformal map $f$ which is given by
\begin{equation*}
\fM_2(f,\rho)=\frac{\iiint_\Omega K_f^2(p)\rho^4(p)d\calL^3(p)}{\iiint_\Omega \rho^4(p)d\calL^3(p)},
\end{equation*}
where $\rho:\Omega\to\R^+$ is a Borel function related to the geometry of $\Omega$. 
In \cite{BFP1}, Balogh, F\"assler and the author developed a method relying on  moduli of curve families inside a region of the Heisenberg group, resulting to the following theorem. 

\medskip

\begin{thm}\label{thm:shrink}
For any $k\in(0,1)$, the stretch map $f_k$ is a sense preserving quasiconformal map
from the Heisenberg spherical ring $S_{a,b}$ onto the Heisenberg spherical ring
$S_{a^k,b^k}$ with maximal distortion $K_{f_k}=k^{-2}$.  Denote by $\mathcal{F}$ the class of quasiconformal maps $f:S_{a,b}\to S_{a^k,b^k}$
which preserve the axis $(0,t)$ and map the boundary components of $S_{a,b}$ to the respective boundary
components of $S_{a^k,b^k}$. Then up to composition with rotations around the vertical axis the stretch map $f_k$ minimises the 2-mean
distortion within the class $\mathcal{F}$: for any $f\in\mathcal{F}$ we have
$$
k^{-3}=K_{f_k}^{3/2}=\fM_2(f_k,\rho_0)\le\fM_2(f,\rho_0),
$$
where
$$
\rho_0(z,t)=\frac{1}{\log(b/a)}\cdot \frac{|z|}{\left||z|^2-it\right|}\cdot \mathcal{X}
(S_{a,b})(z,t),\quad\text{for every}\quad (z,t)\in\fH.
$$
Moreover,
$
K_f\ge K_{f_k}^{3/4}.
$
\end{thm}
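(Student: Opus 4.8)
The plan is to run the length--area (modulus) method with exponent equal to the homogeneous dimension $Q=4$ for the family $\Gamma$ of horizontal curves joining the two boundary spheres of $S_{a,b}$. First I would record the explicit form of $f_k$ in coordinates adapted to the Kor\'anyi gauge $\|(z,t)\|=(|z|^4+t^2)^{1/4}$, check by direct computation that it is a contact transformation, and compute its Beltrami coefficient $\mu_{f_k}$; taking the essential supremum of the resulting pointwise distortion yields $K_{f_k}=k^{-2}$ and the first assertion. Next I would identify $\rho_0$ as the extremal metric for $\Gamma$: with $u=\log(\|(z,t)\|/a)/\log(b/a)$ one verifies that $u$ is sub-elliptically $4$-harmonic on $S_{a,b}$ and that $|\nabla_H u|=\rho_0$ (up to the normalising constant), where $\nabla_H$ denotes the horizontal gradient, so that $\mathrm{mod}_4(\Gamma)=M:=\int_{S_{a,b}}\rho_0^4\,d\calL^3$; the same computation on the target gives $M':=\int_{S_{a^k,b^k}}\tilde\rho_0^4\,d\calL^3$ for the analogous family, and a scaling argument produces $M/M'=(\log(b^k/a^k)/\log(b/a))^3=k^3$.

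The heart of the matter is the inequality $M'\le\int_{S_{a,b}}K_f^2\rho_0^4\,d\calL^3$ for every $f\in\mathcal{F}$. To establish it I would push the source potential forward and set $U=u\circ f^{-1}$ on $S_{a^k,b^k}$. Because $f$ carries each boundary sphere of $S_{a,b}$ to the corresponding boundary sphere of $S_{a^k,b^k}$, the function $U$ takes the values $0$ and $1$ on the two components, so $|\nabla_H U|$ is admissible for the target family and $M'\le\int_{S_{a^k,b^k}}|\nabla_H U|^4\,d\calL^3$. The chain rule for horizontal gradients under a contact map, together with $|\nabla_H u|=\rho_0$, gives $|\nabla_H U|(f(p))\le\rho_0(p)/(|Zf_I(p)|-|\overline{Z}f_I(p)|)$. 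Changing variables with the contact Jacobian $J_f=(|Zf_I|^2-|\overline{Z}f_I|^2)^2$ and using the algebraic identity $J_f/(|Zf_I|-|\overline{Z}f_I|)^4=K_f^2$, the right-hand side collapses exactly to $\int_{S_{a,b}}K_f^2\rho_0^4\,d\calL^3$. Dividing by $M=\int_{S_{a,b}}\rho_0^4\,d\calL^3$ gives $\fM_2(f,\rho_0)\ge M'/M=k^{-3}$.

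To see that the bound is sharp I would check that for $f_k$ the pushed-forward potential $u\circ f_k^{-1}$ coincides with the target extremal potential and that the gradient estimate above holds with equality, since the horizontal differential of $f_k$ stretches precisely along the gauge-radial direction; hence $\fM_2(f_k,\rho_0)=k^{-3}$. The Moreover statement then follows at once: from $K_f(p)\le K_f$ a.e.\ one has $\fM_2(f,\rho_0)\le K_f^2$, and combining this with $\fM_2(f,\rho_0)\ge k^{-3}$ gives $K_f\ge k^{-3/2}=K_{f_k}^{3/4}$. The equality case, finally, is obtained by tracing back through the estimate: equality forces the horizontal differential of $f$ to align with the radial foliation with the extremal ratio almost everywhere, which identifies the minimisers as $f_k$ post-composed with rotations about the vertical axis.

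I expect the principal obstacle to be the rigorous justification of the gradient chain rule and the change of variables for a map assumed only to be quasiconformal. This requires the Kor\'anyi--Reimann regularity theory---Pansu differentiability almost everywhere, membership of $f$ and $f^{-1}$ in the horizontal Sobolev class, validity of the area formula, and the fact that $f^{-1}$ is again quasiconformal with the same distortion---to make the manipulations legitimate and to guarantee that $|\nabla_H U|$ is genuinely admissible for the target family despite $U$ having only Sobolev regularity. A secondary delicate point is the equality analysis, where one must invoke uniqueness of the extremal metric to promote pointwise equality in the distortion estimate to the geometric conclusion that $f$ is a rotation of the stretch map.
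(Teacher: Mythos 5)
First, a point of orientation: the present paper does not prove Theorem \ref{thm:shrink} at all --- it is quoted as background from \cite{BFP1}, and the only arguments in this paper are the surface-family analogues (Theorem \ref{thm-modulusinequalities}, Lemmas \ref{lem:mod1} and \ref{lem:mod2}) built for the $2/3$-mean distortion of Theorem \ref{thm:main}. Measured against the proof in \cite{BFP1}, your proposal is correct and is in substance the same length--area method at the homogeneous exponent $Q=4$: the same curve family joining the boundary components, the same extremal density $\rho_0=|\nabla_H u|$ with $u=\log(N/a)/\log(b/a)$ for the Kor\'anyi gauge $N$, the same key algebraic identity $J_f/(|Zf_I|-|\overline{Z}f_I|)^4=K_f^2$, and sharpness for $f_k$ because it maps gauge spheres to gauge spheres (so $u\circ f_k^{-1}$ is the target potential) and stretches minimally along the radial Legendrian foliation --- the minimal stretching property that this paper axiomatises in Corollary \ref{cor:ineq}. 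The one genuine difference is dual formulation: \cite{BFP1} works on the density side, pushing an admissible $\rho$ forward by $\rho'=\bigl(\rho/(|Zf_I|-|\overline{Z}f_I|)\bigr)\circ f^{-1}$ and computing the modulus of the foliating radial subfamily by H\"older (exactly the pattern of Theorem \ref{thm-modulusinequalities} and Lemma \ref{lem:mod1} here, transposed to curves), whereas you push the potential forward, setting $U=u\circ f^{-1}$, and certify extremality of $\rho_0$ via $4$-harmonicity of $\log N$ and the capacity of the Kor\'anyi ring. Your route buys a cleaner identification of the target lower bound $M'$ (it is just the variational capacity inequality applied to the competitor $U$), at the cost of needing the identification of the connecting-family modulus with the $4$-capacity and Sobolev competitor-hood of $U$ --- precisely the Kor\'anyi--Reimann/Pansu regularity package you correctly flag, and which \cite{BFP1} sidesteps by staying on the density side where only ACL-type absolute continuity on almost every curve is invoked. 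One small caution: the clause ``up to composition with rotations'' in the statement only records that rotations composed with $f_k$ are also minimisers; the equality/uniqueness analysis you sketch at the end is not required for the claim, and your assertion that equality forces $f$ to be a rotated stretch would need a separate rigidity argument that is not part of the quoted theorem.
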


\medskip

The stretch map $f_k$ of Theorem \ref{thm:shrink} (which is actually a {\it shrink} map since $k\in(0,1)$) is a generalisation of the classical stretch map
$$
g_k(z)=z|z|^{k-1},\quad z\in\C.
$$
In cartesian coordinates $f_k$ is given by the  formula
$$
f_k(z,t)=\left(k^{1/2}z\left(\frac{\overline{w}}{k|z|^2+it}\right)^{1/2}|w|^{\frac{k-1}{2}},\;t\cdot\frac{|w|^k}{k|z|^2+it}\right),
$$
where $w=|z|^2-it$. Observe that if we set $t=0$ we find $f_k(z,0)=(g_k(z),0)$ and in this way we recover the classical stretch. Nevertheless, the cartesian expression of $f_k$ is rather complicated;  the stretch  is best described in the parametrisation of $\fH$ by {\it logarithmic coordinates} $(\xi,\psi,\eta)$. This parametrisation is given by 
$$
(\xi,\psi,\eta)\mapsto\left(i\cos^{1/2}\psi e^{\frac{\xi+i(\psi-3\eta)}{2}},-\sin\psi e^{\xi}\right),
$$
where $\xi\in\R$, $\psi\in[-\pi/2,\pi/2]$ and $\psi-3\pi\le 3\eta\le\psi=\pi$ (for details, see Section \ref{sec:log} as well as \cite{BFP1} and \cite{P}). Those coordinates are  analogous to the logarithmic coordinates of the complex plane; the contact form, the contact conditions and the Beltrami equations admit expressions which are considerably easier to handle than in the cartesian case. In fact, the expression for the stretch  $f_k$ is 
$$
f_k(\xi,\psi,\eta)=\left(k\xi,\arctan\left(\frac{\tan\psi}{k}\right),\eta\right).
$$
Note finally that the above result holds only for $k\in(0,1)$; for $k>1$ the particular modulus method does not work. 

We now define the {\it 2/3-mean distortion } $\fM_{2/3}(f,\rho_0)$ of a quasiconformal transformation $f$  by
\begin{equation*}
\fM_{2/3}(f,\rho_0)=\frac{\iiint_\Omega K_f^{2/3}(p)\rho_0^{4/3}(p)d\calL^3(p)}{\iiint_\Omega \rho^{4/3}(p)d\calL^3(p)},
\end{equation*}
where again  $\rho_0:\Omega\to\R^+$ is a Borel function related to the geometry of $\Omega$.  In this article we prove the following theorem.

\medskip

\begin{thm}\label{thm:main}
Let $k\in(0,1)$ and d
enote by $\mathcal{F'}$ the class of $\calC^2$ contact quasiconformal maps $f:S_{a,b}\to S_{a^k,b^k}$
which preserve the axis $(0,t)$ and map the boundary components of $S_{a,b}$ to the respective boundary
components of $S_{a^k,b^k}$. Then up to composition with rotations around the vertical axis the stretch map $f_k$ minimises the $2/3$-mean
distortion within the class $\mathcal{F}$: for any $f\in\mathcal{F}$ we have have
$$
k^{-1/3}=K_{f_k}^{1/6}=\fM_{2/3}(f_k,\rho_0)\le\fM_{2/3}(f,\rho_0),
$$
where
$$
\rho_0(z,t)=\frac{1}{4\pi\log(b/a)}\cdot\frac{1}{|z|\cdot\left||z|^2-it\right|}\cdot\mathcal{X}
(S_{a,b})(z,t)\quad\text{for every}\quad (z,t)\in\fH.
$$
Moreover,
$
K_f\ge K_{f_k}^{1/4}.
$
\end{thm}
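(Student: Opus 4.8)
The plan is to replace the curve--family modulus of Theorem \ref{thm:shrink} by a modulus of a family of \emph{surfaces}, with the exponent forced by the homogeneous dimension $Q=4$ of $\fH$: a separating surface has codimension one, so the natural conformal exponent is $Q/(Q-1)=4/3$, which is exactly the power carried by $\rho_0^{4/3}$ and matched by the $2/3$ in $\fM_{2/3}$. First I would pass to logarithmic coordinates $(\xi,\psi,\eta)$, in which $S_{a,b}$ becomes a product region with $\xi$ ranging over an interval of length $\log(b/a)$ and $(\psi,\eta)$ over a fixed base, and in which $f_k(\xi,\psi,\eta)=(k\xi,\arctan(\tan\psi/k),\eta)$. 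Let $\Sigma$ be the family of all surfaces separating the two boundary components of $S_{a,b}$, with extremal representative the foliation by the Kor\'anyi spheres $\{\xi=\text{const}\}$; since $f_k$ carries this foliation to the analogous foliation of $S_{a^k,b^k}$, the stretch is adapted to $\Sigma$, and every separating surface downstairs is $f^{-1}$ of a separating surface upstairs for any $f\in\mathcal F'$.

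Defining admissibility of a Borel density $\rho\ge 0$ by $\int_\sigma\rho\,dA\ge 1$ for every $\sigma\in\Sigma$ (with $dA$ the intrinsic surface measure) and setting $M_{4/3}(\Sigma)=\inf_\rho\iiint_{S_{a,b}}\rho^{4/3}\,d\calL^3$, I would identify the extremal density by a coarea computation combined with H\"older's inequality: it is the leaf--area--weighted constant, which after normalisation is precisely the $\rho_0$ of the statement. The same computation gives $\iiint\rho_0^{4/3}\,d\calL^3=M_{4/3}(\Sigma)$ and exhibits the scaling $M_{4/3}(\Sigma)=C\,(\log(b/a))^{1-4/3}=C\,(\log(b/a))^{-1/3}$; applied to the target ring this yields $M_{4/3}(\Sigma')=C\,(k\log(b/a))^{-1/3}=k^{-1/3}M_{4/3}(\Sigma)$, which is where the factor $k^{-1/3}$ is born.

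The heart of the argument is the transformation rule. Given $f\in\mathcal F'$, pushing forward the extremal density produces a competitor for the separating family $\Sigma'$ of $S_{a^k,b^k}$: setting $\tilde\rho\circ f=\rho_0/\lambda$, where $\lambda$ is the minimal tangential area distortion of $f$ over $2$--planes transverse to the horizontal distribution, makes $\tilde\rho$ admissible, and a change of variables gives $M_{4/3}(\Sigma')\le\iiint_{S_{a,b}}\rho_0^{4/3}\,(J_f/\lambda^{4/3})\,d\calL^3$, with $J_f$ the volume Jacobian. Writing $\sigma_1\ge\sigma_2>0$ for the horizontal singular values of $f$, the contact condition $f^*\omega=\lambda\omega$ forces $J_f=(\sigma_1\sigma_2)^2$, while the transverse area element (horizontal stretch times the contact factor on the Reeb direction) gives $\lambda=\sigma_1\sigma_2^2$; the algebraic identity
\[
\frac{J_f}{\lambda^{4/3}}=\frac{(\sigma_1\sigma_2)^2}{(\sigma_1\sigma_2^2)^{4/3}}=\Big(\frac{\sigma_1}{\sigma_2}\Big)^{2/3}=K_f^{2/3}
\]
then converts the construction into $M_{4/3}(\Sigma')\le\iiint_{S_{a,b}}K_f^{2/3}\rho_0^{4/3}\,d\calL^3=\fM_{2/3}(f,\rho_0)\,M_{4/3}(\Sigma)$. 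Dividing by $M_{4/3}(\Sigma)$ and inserting the ratio from the previous step gives $\fM_{2/3}(f,\rho_0)\ge k^{-1/3}$, with equality for $f_k$ checked by direct substitution (the leaves of $\Sigma$ realise the minimal distortion under $f_k$, so every inequality in the chain is sharp). Finally $K_f\ge K_{f_k}^{1/4}$ follows from the trivial bound $\fM_{2/3}(f,\rho_0)\le K_f^{2/3}$, since then $K_f^{2/3}\ge k^{-1/3}=K_{f_k}^{1/6}$.

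I expect the main obstacle to be making the surface--modulus transformation rigorous. Unlike the curve case, where length distortion along a curve is bounded directly by the operator norm of the horizontal differential, here one must fix the correct intrinsic area measure on surfaces transverse to the horizontal distribution, justify that its minimal distortion is $\lambda=\sigma_1\sigma_2^2$ using only the contact condition and the Beltrami system, and verify the measurability and admissibility of the pushed--forward density for a merely $\calC^2$ map together with the change--of--variables formula. These analytic points, rather than the (clean) final algebra, carry the technical weight.
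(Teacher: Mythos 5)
Your transformation-rule algebra is exactly the paper's Modulus Inequality (Theorem \ref{thm-modulusinequalities}: the identity $J_f/(\sigma_1\sigma_2^2)^{4/3}=K_f^{2/3}$, with $\sigma_1\sigma_2^2=\lambda(|Zf_I|-|\overline{Z}f_I|)$ the minimal normal stretch from Proposition \ref{N-ineq}), the exponent $4/3$ is right, and your deduction of $K_f\ge K_{f_k}^{1/4}$ from $\fM_{2/3}(f,\rho_0)\le K_f^{2/3}$ is sound. But there is a genuine gap: you chose the wrong surface family. The paper's extremal family $\Sigma_0$ is the foliation by Heisenberg \emph{cones} $t=-\alpha|z|^2$, i.e.\ $\{\psi=\mathrm{const}\}$ in logarithmic coordinates, which \emph{join} the two boundary components of $S_{a,b}$; you took the Kor\'anyi spheres $\{\xi=\mathrm{const}\}$, which \emph{separate} them. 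This breaks the argument at three places. First, the scaling: each sphere carries the normalisation $\iint\rho\,d\E^h\ge1$ on its own, so the modulus of the sphere family \emph{adds} over leaves and grows like $\log(b/a)$; using $\|N^h_{\sigma_\xi}\|=\tfrac32 e^{3\xi/2}\cos^{1/2}\psi$ and per-leaf H\"older minimisation one gets exactly ${\rm Mod}=\pi^{-2/3}\log(b/a)$. Your formula $(\log(b/a))^{1-4/3}$ is the scaling of the \emph{joining} family (the cones stretch across the ring, so the admissible density scales like $1/\log(b/a)$), and it is what the paper obtains: $(2^5\pi\log(b/a))^{-1/3}{\rm B}(1/2,1/6)$. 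With spheres the target/source ratio is $k\log(b/a)/\log(b/a)=k<1$, so your method would only give $\fM_{2/3}(f,\rho)\ge k$, which is vacuous. Second, the extremal density of the sphere family is, by the same H\"older computation, proportional to $e^{-3\xi/2}\cos^{3/2}\psi$ on each leaf, whereas the theorem's $\rho_0$ restricted to a sphere is proportional to $e^{-3\xi/2}\cos^{-1/2}\psi$; in fact $\iint_{S_\xi}\rho_0\,d\E^h=\pi/(2\log(b/a))$, so with its stated normalisation $\rho_0$ is not even admissible for spheres once $\log(b/a)>\pi/2$. Third, sharpness for $f_k$ fails: on the spheres the phase of $m$ gains $\pi/2$ relative to the cones, so $\mu_{f_k}e^{-2i\arg(m)}<0$ for $k\in(0,1)$, i.e.\ $f_k$ attains the \emph{maximal}, not minimal, stretching equality of Corollary \ref{cor:ineq} on your leaves; your claim that ``the leaves realise the minimal distortion under $f_k$'' is exactly reversed.

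The repair is to run your scheme on the cone foliation, which is what the paper does: the cones satisfy the measure decomposition of Lemma \ref{lem:mod1} with $d\mu(\psi)=\tfrac12(3/2)^{-1/3}\cos^{-2/3}\psi\,d\psi$ (the sphere foliation fails this hypothesis, since the would-be leaf measure depends on the leaf coordinate $\psi$), the extremal density is precisely the stated $\rho_0$, and $f_k$ has the mSP with $K_{f_k}$ constant on leaves, whence ${\rm Mod}(f_k(\Sigma_0))=k^{-1/3}{\rm Mod}(\Sigma_0)$ by Lemma \ref{lem:mod2}. Be warned also that the step you dismissed as topological --- every surface downstairs pulls back from one upstairs --- is, for the joining family, the delicate part (Step 3 of the paper): one cannot take all surfaces joining the boundaries, but must impose the graph condition (C) ($\Psi=\Psi(\xi,\eta)$ in logarithmic coordinates), prove via the contact equations that $f^{-1}$ of a cone satisfies it for every $f\in\mathcal{F}'$, and verify $\rho_0\in{\rm Adm}(\Sigma)$ by the explicit computation in which the integrand dominates $1$ pointwise.
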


\medskip

To prove this theorem we develop a method of modulus of surfaces as a counterpart of the modulus method of curve families that is developed in \cite{BFP1}. Throughout  this paper, a {\it surface} $\calS$ inside $\fH$ is supposed in the Euclidean sense and endowed with sufficient regularity (i.e., $\calC^2$). Outside an exceptional set of $\calS$ of small Lebesgue measure (the {\it characteristic locus of $\E$}), a horizontal vector field along $\E$ is canonically defined; this is the {\it horizontal normal vector field $N^h_\calS$ of $\E$}. That is, the vector field $N^h_\calS$ of $\E$ lies in the horizontal subbubdle of $\fH$ at points of $\calS$. The following property (see Proposition \ref{N-ineq}) holds for surfaces $\calS$ and ${\widetilde \calS}$ such that ${\widetilde \calS}=f(\calS)$ where $f=(f_I,f_3)$ is a $\calC^2$ contact quasiconformal transformation:
 $$
 \lambda(|Zf_I|-|\overline{Z}f_I|)\cdot\|N^h_\calS\|\le \|N^h_{\widetilde \calS}\|\le\lambda(|Zf_I|+|\overline{Z}f_I|)\cdot\|N^h_\calS\|.
 $$
 Where, $\lambda$ is the square root of the Jacobian $J_f$ of $f$. A transformation $f$ is said to have the {\it minimal stretching property (mSP)} for $\E$ if the left hand side inequality is attained as an equality.

Given now a family $\Sigma$ of such surfaces inside a domain $\Omega\subset\fH$, we define in Section \ref{sec:modsurfaces} the modulus ${\rm Mod}(\Sigma)$ as
 $$
 {\rm Mod}(\Sigma)=\inf_{\rho\in{\rm Adm}(\Sigma)}\iiint_\Omega\rho^{4/3}d\calL^3.
 $$
 Here, ${\rm Adm}(\Sigma)$ is the set of positive Borel functions $\rho$ defined in $\Omega$ such that
 $$
 \iint_\calS\rho dS^h\ge 1.
 $$
 The integral on the left is defined in a local parametrisation $\sigma:U\to\R^3$, $(u,v)\mapsto\sigma(u,v)$ as
 $$
 \iint_U\rho(\sigma(u,v))\|N^h_\sigma(u,v)\|dudv.
 $$
 The following Modulus Inequality (Theorem \ref{thm-modulusinequalities}) holds: if $f:\Omega\to\Omega'$ is a $\calC^2$ contact quasiconformal transformation between domains of $\fH$ with distortion function $K_f(p)$ and $\Sigma$ is a family of surfaces inside $\Omega$ then
 $$
 {\rm Mod}(f(\Sigma))\le\iiint_\Omega K_f^{2/3}(p)\rho^{4/3}(p)d\calL^3(p),
 $$
 for every $\rho\in{\rm Adm}(\Sigma)$. From this, we also obtain that for an extremal density $\rho_0$ of $\Sigma$, i.e., for a $\rho_0\in{\rm Adm}(\Sigma)$ such that ${\rm Mod}(\Sigma)=\iiint_\Omega\rho_0^{4/3}d\calL^3$ we have
 $$
 \frac{{\rm Mod}(f(\Sigma))}{{\rm Mod}(\Sigma)}\le\fM_{2/3}(f,\rho_0)
 $$
 and also, if $K_f$ is the maximal distortion of $f$ then
 $$
 K_f^{-2/3}{\rm Mod}(\Sigma)\le{\rm Mod}(f(\Sigma))\le  K_f^{2/3}{\rm Mod}(\Sigma).
 $$

\medskip

The modulus method we develop here concerns domains $\Omega$ of $\fH$ which are foliated in a particular way by a family $\Sigma_0$ of  surfaces inside $\Omega$, see Lemma \ref{lem:mod1}: in this case we find an extremal density $\rho_0$ for $\Sigma_0$ from which we may explicitly calculate ${\rm Mod}(\Sigma_0)$. Let now  $\mathcal{F}'$ be the class of all $\calC^2$ contact quasiconformal maps from $\Omega$ to another domain $\Omega'$ which satisfy certain boundary conditions. Suppose  additionally  that there exists an $f_0\in{\mathcal F}'$ which has the mSP for each surface of $\Sigma_0$ and is also  such that its distortion function $K_{f_0}(p)$ is constant on each leaf $\E\in\Sigma_0$, see Lemma \ref{lem:mod2}. Then, we can  calculate ${\rm Mod}(f_0(\Sigma_0))$ and show that it equals to $\iiint_\Omega K_{f_0}^{2/3}(p)\rho_0^{4/3}(p)d\calL^3(p)$. In summary,  our modulus method is given in the next theorem.

\medskip

\begin{thm}\label{thm:modmethod}
 Let $\Sigma_0$, $\rho_0$ and $f_0$ be such that they satisfy the assumptions of Lemmas \ref{lem:mod1} and \ref{lem:mod2}. Let $\Sigma\supseteq\Sigma_0$ be a family of $\calC^2$ surfaces in $\Omega$ so that $\rho_0\in{\rm Adm}(\Sigma)$ and consider the class $\mathcal{F}'$ of all $\calC^2$ contact quasiconformal transformations $f:\Omega\to\Omega'$ which satisfy
$$
{\rm Mod}(f_0(\Sigma_0))\le{\rm Mod}(f(\Sigma)).
$$
Then for all $f\in\mathcal{F}'$ we have
$$
\fM_{2/3}(f_0,\rho_0)\le\fM_{2/3}(f,\rho_0).
$$
\end{thm}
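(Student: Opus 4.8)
The plan is to reduce the desired inequality between mean distortions to a short chain of relations among moduli, the key observation being that the two quantities $\fM_{2/3}(f_0,\rho_0)$ and $\fM_{2/3}(f,\rho_0)$ share a common denominator. Writing out the definition,
$$
\fM_{2/3}(g,\rho_0)=\frac{\iiint_\Omega K_{g}^{2/3}(p)\,\rho_0^{4/3}(p)\,d\calL^3(p)}{\iiint_\Omega \rho_0^{4/3}(p)\,d\calL^3(p)}\qquad(g=f_0,\,f),
$$
and noting that the denominator is one and the same fixed positive constant, the claim $\fM_{2/3}(f_0,\rho_0)\le\fM_{2/3}(f,\rho_0)$ is equivalent to the single numerator inequality
$$
\iiint_\Omega K_{f_0}^{2/3}(p)\,\rho_0^{4/3}(p)\,d\calL^3(p)\ \le\ \iiint_\Omega K_{f}^{2/3}(p)\,\rho_0^{4/3}(p)\,d\calL^3(p).
$$

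First I would evaluate the left-hand numerator exactly. By hypothesis the triple $(\Sigma_0,\rho_0,f_0)$ meets the assumptions of Lemma \ref{lem:mod2}, where the minimal stretching property of $f_0$ on the leaves of $\Sigma_0$, together with the constancy of $K_{f_0}$ along each leaf, turns the Modulus Inequality into an equality and yields
$$
{\rm Mod}(f_0(\Sigma_0))=\iiint_\Omega K_{f_0}^{2/3}(p)\,\rho_0^{4/3}(p)\,d\calL^3(p).
$$
Next I would bound the right-hand numerator from below. Since $\rho_0\in{\rm Adm}(\Sigma)$ by assumption, Theorem \ref{thm-modulusinequalities} applied to the map $f$ and the family $\Sigma$ with this admissible density gives
$$
{\rm Mod}(f(\Sigma))\le\iiint_\Omega K_{f}^{2/3}(p)\,\rho_0^{4/3}(p)\,d\calL^3(p).
$$

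It remains only to link the two moduli, which is precisely the membership condition defining the class $\mathcal{F}$, namely ${\rm Mod}(f_0(\Sigma_0))\le{\rm Mod}(f(\Sigma))$. Concatenating the three relations in order,
$$
\iiint_\Omega K_{f_0}^{2/3}\rho_0^{4/3}\,d\calL^3={\rm Mod}(f_0(\Sigma_0))\le{\rm Mod}(f(\Sigma))\le\iiint_\Omega K_{f}^{2/3}\rho_0^{4/3}\,d\calL^3,
$$
we recover exactly the numerator inequality, and dividing by the common denominator finishes the proof. The argument is essentially bookkeeping of inequalities already at our disposal; the one step that genuinely matters --- and which is supplied by Lemmas \ref{lem:mod1} and \ref{lem:mod2} rather than by the present proof --- is the upgrade of the Modulus Inequality to an \emph{equality} for the extremal pair $(\Sigma_0,f_0)$. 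Theorem \ref{thm-modulusinequalities} alone furnishes only the inequality ${\rm Mod}(f_0(\Sigma_0))\le\iiint_\Omega K_{f_0}^{2/3}\rho_0^{4/3}\,d\calL^3$, and this by itself does not chain with the others to compare the two numerators; it is the extremality encoded in those lemmas that reverses it into the needed equality and thereby carries the conclusion.
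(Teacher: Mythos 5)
Your proposal is correct and follows essentially the same route as the paper: both proofs combine the equality ${\rm Mod}(f_0(\Sigma_0))=\iiint_\Omega K_{f_0}^{2/3}\rho_0^{4/3}\,d\calL^3$ from Lemma \ref{lem:mod2} with the Modulus Inequality \ref{modulusineq1} applied to $\rho_0\in{\rm Adm}(\Sigma)$, chained through the defining hypothesis ${\rm Mod}(f_0(\Sigma_0))\le{\rm Mod}(f(\Sigma))$ of the class $\mathcal{F}$. The only cosmetic difference is that the paper divides by ${\rm Mod}(\Sigma)={\rm Mod}(\Sigma_0)=\iiint_\Omega\rho_0^{4/3}\,d\calL^3$ (using the extremality of $\rho_0$ from Lemma \ref{lem:mod1}) so as to phrase the chain as a ratio of moduli, whereas you compare the numerators directly over the common denominator.
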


\medskip

In the proof of Theorem \ref{thm:main} our family $\Sigma_0$ consists of Heisenberg cones $\calC_\alpha$, i.e., paraboloids with cartesian equation $t=-\alpha|z|^2$. These surfaces foliate $S_{a,b}$ in a manner according to the assumptions of Lemma \ref{lem:mod1}; subsequently,  in Proposition \ref{moduluscones} we find the density $\rho_0$ as in Theorem \ref{thm:main} and also that 
$$
{\rm Mod}(\Sigma_0)=\left(2^5\pi\log(b/a)\right)^{-1/3}{\rm B}(1/2,1/6).
$$
Now, for $k\in(0,1)$ the stretch map $f_k$ satisfies the assumptions of Lemma \ref{lem:mod2}, i.e., it has the mSP for $\Sigma_0$ and it is constant at each leaf. Thus we find
$$
{\rm Mod}(f_k(\Sigma_0))=\iiint_{S_{a,b}}K_{f_k}^{2/3}(p)\rho_0^{4/3}(p)d\mathcal{L}^3(p)=k^{-1/3}\cdot{\rm Mod}(\Sigma_0).
$$ 
In order to apply Theorem \ref{thm:modmethod} and conclude the proof, we must detect a surface family $\Sigma$ such that the density $\rho_0$ is admissible for $\Sigma$ and $f_k(\Sigma_0))\subseteq f(\Sigma)$ for each $f\in\mathcal{F'}$. Indeed, we find that this family $\Sigma$ consists of surfaces inside $S_{a,b}$ which join the two pieces of the boundary and enjoy an additional property: for every $\E\in\Sigma$, there is no  non-characteristic point  $p\in\E$ with a neighborhood $U_p$ being a piece of a Heisenberg sphere.

\medskip

At this point, we wish to make the following comment on the general setup of this work and in particular, on the proof of our main theorem. Having assumed $\calC^2$ regularity for our surfaces lead us to find a minimiser of the $2/3$-mean distortion $\fM_{2/3}(f,\rho_0)$ {\it only} among the class $\mathcal{F'}$ of $\calC^2$ contact quasiconformal mappings $f$ between spherical rings $S_{a,b}$ and $S_{a^k,b^k}$ of $\fH$ which map boundary components to respective boundary components. Of course, this class is smaller than the class $\mathcal{F}$ of arbitrary quasiconformal transformations between these two domains which satisfy the same boundary conditions.  It seems plausible to conjecture that the same method may apply in the case of {\it $\fH$-regular surfaces} (for the definition and the properties of these surfaces, see for instance \cite{KSC}). This will enable us to omit the regularity hypothesis for the quasiconformal mappings of our theorem and prove in fact that the stretch map is  a minimiser 
within the wider class $\mathcal{F}$.

\medskip

The paper is organised as follows. Preliminaries about the Heisenberg group $\fH$ and quasiconformal mappings defined on $\fH$ are given in Section \ref{sec:prel}. Section \ref{sec:surfaces} is brief study of $\calC^2$ regular surfaces embedded in $\fH$.
The definition of the modulus of a surface family, the Modulus Inequality as well as the modulus method and the proof of Theorem \ref{thm:modmethod} are in Section \ref{sec:modulus}. Finally, the proof of our main Theorem \ref{thm:main} lies in Section \ref{sec:main}.

\medskip

{\it Aknowledgments.} The author wishes to thank Zolt\'an Balogh and Katrin F\"assler for fruitful discussions and useful observations.

\section{Preliminaries}\label{sec:prel}
The material of this section is standard and we refer the reader to \cite{CDPT} for more details. The Heisenberg group is described in Section \ref{sec:heis} and a brief overview of the Kor\'anyi-Reimann theory of quasiconformal mappings of $\fH$ lies in Section \ref{sec:quas}.

\subsection{Heisenberg Group}\label{sec:heis}
The Heisenberg group $\fH$  is $\C\times\R$ whose group law and distance $d_\fH$  are described in the introduction.
The metric $d_\fH$ is invariant by a
 {\it left translations} $T_{(\zeta,s)}$, defined  for a given $(\zeta,s)\in\fH$ by
 $
 T_{(\zeta,s)}(z,t)$ $=(\zeta,s)*(z,t)
 $
and also by {\it rotations about the vertical axis} $R_\theta $, defined for a given $\theta\in\R$ by
 $
 R_\theta(z,t)=(ze^{i\theta},t).
 $
Left translations and rotations form the group ${\rm Isom}(\fH,d_\fH)$ of (orientaion-preserving) {\it Heisenberg isometries}. There are two other kinds of transformations that are of particular importance; in the first place we have {\it dilations} $D_\delta$  defined for a given $\delta>0$ by
 $
 D_\delta(z,t)=(\delta z, \delta^2 t).
 $
 One may check that  the metric $d_\fH$ is  scaled up to the multiplicative constant $\delta$ by the action of $D_\delta$. Finally there is an   {\it inversion} $I$ with respect to the unit Heisenberg sphere  given for all $(z,t)\neq(0,0)$ by
$
I(z,t)=\left(\frac{z}{-|z|^2+it}\;,\;\frac{-it}{\left|-|z|^2+it\right|^2}\right).
$

The Heisenberg group $\fH$ is a 2-step nilpotent Lie group with  left invariant vector fields
\begin{eqnarray*}
X=\frac{\partial}{\partial x}+2y\frac{\partial}{\partial t},\quad Y=\frac{\partial}{\partial y}-2x\frac{\partial}{\partial t},
\quad T=\frac{\partial}{\partial t}
\end{eqnarray*}
and  the complex fields $Z$, $\overline{Z}$ as in the introduction are just
\begin{eqnarray*}
Z=\frac{1}{2}(X-i Y)
,\quad
\overline{Z}=\frac{1}{2}(X+i Y).
\end{eqnarray*}
There is a non-trivial commutation relation $[X,Y]=-4T$ and the tangent space to $\fH $ is spanned by $X,Y,T$. 
The Lie algebra of left invariant vector fields of $\fH$ has a grading $\mathfrak{h} = \mathfrak{v}_1\oplus \mathfrak{v}_2$  with
\begin{displaymath}
\mathfrak{v}_1 = \mathrm{span}_{\R}\{X, Y\}\quad \text{and}\quad \mathfrak{v}_2=\mathrm{span}_{\R}\{T\}.
\end{displaymath}
The contact form $\omega$ of $\fH$ is defined as the unique 1-form satisfying $X,Y\in{\rm ker}\omega$, $\omega(T)=1$; in $(z,t)$ coordinates
$$
\omega=dt+2\Im(\overline{z}dz).
$$
Uniqueness here is modulo change of coordinates as it follows by the Darboux's Theorem. The distribution ${\rm H}(\fH)=\mathfrak{v}_1$ is called the {\it horizontal distribution}, any vector field $V\in\mathfrak{v}_1$ is called a {\it horizontal vector field} and if $p\in\fH$, the space ${\rm H}_p(\fH)=\mathrm{span}_{\R}\{X_p, Y_p\}$
is called the {\it horizontal tangent plane} to $p$. The {\it sub-Riemannian metric} of $\fH$ is the metric defined in ${\rm H}(\fH)$ by the relations $\langle X, X\rangle=\langle Y,Y\rangle=1$ and $\langle X,Y\rangle=\langle Y,X\rangle=0$; its norm shall be denoted by $\|\cdot\|$.

The {\it Legendrian foliation} is the foliation of $\fH$ by horizontal curves. An absolutely continuous curve $\gamma:[a,b]\to \fH$ (in the Euclidean sense) with 
$\gamma(t)=(\gamma_I(t),\gamma_3(t))\in\mathbb{C}\times \mathbb{R}$
 is called {\it horizontal} if
\begin{displaymath}
 \dot{\gamma}(t)\in {\rm H}_{\gamma(t)}(\fH)\quad \text{for almost every}\;t\in [a,b].
\end{displaymath}
A curve $\gamma:[a,b]\to \fH$ is absolutely continuous with respect to the  $d_\fH$ if and only if it is a horizontal curve. 

A {\it contact transformation} $f:\Omega \to \Omega'$ on $\fH$ is a diffeomorphism between domains $\Omega$ and $\Omega'$ in $\fH$ which preserves the contact structure, i.e.,
\begin{equation}\label{eq:contact_form}
 f^*\omega = \lambda \omega,
\end{equation}
for some non-vanishing real valued function $\lambda$. We  write $f=(f_I,f_3)$, $f_I=f_1+\mathrm{i}f_2$. Then a contact map $f$ is completely determined by $f_I$ in the sense that the contact condition (\ref{eq:contact_form}) is equivalent to the following system of differential equations:
\begin{eqnarray}
\label{eq:C1}
&&\overline{f}_IZ f_I - f_I Z \overline{f}_I +iZf_3=0\\
&&
\label{eq:C2}
f_I \overline{Z}\overline{f}_I-\overline{f}_I \overline{Z}f_I -
i\overline{Z}f_3=0\\
&&
\label{eq:C3}
-i(\overline{f}_I T f_I - f_I T \overline{f}_I +
iTf_3)=\lambda.
\end{eqnarray}
If $f$ is $\mathcal{C}^2$ then 
$
\det J_f=\lambda^2,
$
where by $J_f$ we denote the usual Jacobian matrix  of $f$.

\subsection{Quasiconformal Mappings}\label{sec:quas}

There are various analytic definitions of quasiconformality in $\fH$ which are all equivalent to the metric definition \ref{defn:metric}; for instance, we refer the reader to \cite{D}, \cite{H1} and \cite{V}. For the analytic definition of qc mappings we state here,  we follow the lines of \cite{KR1} and \cite{KR2} with minor deviations.  We  first   recall the notions of the $P$-{\it differentiability} of mappings between domains of $\fH$ and that of the {\it absolute continuity in lines} (ACL). According to  Pansu, \cite{Pa}, a mapping $f:\Omega\to\Omega'$ between domains of $\fH$ is called $P$-differentiable at $p\in\Omega$ if for $c\to 0$ the mappings
$$
D_c^{-1}\circ T_{f(p)}^{-1}\circ f\circ T_{p}\circ D_c
$$
converge locally uniformly to a homomorphism $(D_0)f_p$ from $T_p(\fH)$ to $T_{f(p)}(\fH)$  which preserves the horizontal space ${\rm H}(\fH)$. Here $D$ and $T$ are dilations and left translations respectively.
In terms of the standard basis $Z,\overline{Z},T$, the $P$-derivative of $f=(f_I,f_3)$ at $p$ is in matrix form
\begin{equation*}
 (D_0)f_p=\left(\begin{matrix} Zf_I & \overline{Z}f_I& 0\\
                 Z\overline{f_I}&\overline{Zf_I}&0\\
                 0&0&|Zf_I|^2-|\overline{Z}f_I|^2
                \end{matrix}\right)_p,
\end{equation*}
where all derivatives are in the distributional sense. Quasiconformal mappings between domains in $\fH$ are a.e. $P$-differentiable, see \cite{Pa}. In particular, (see Proposition 6 of \cite{KR2}), if $f$ is $P$-differentiable at $p\in\fH$ with derivative $(D_0)f_p$, then the restriction of $f$ to the plane
$$
\left\{p\exp(xX+yY)\;|\;(x,y)\in\R^2\right\}
$$
is differentiable at $p$ in the Euclidean sense and its derivative $(D_h)F_p$ is the restriction of $(D_0)F_p$ in horizontal spaces;  in matrix form, it is given by
\begin{equation*}
 (D_h)f_p=\left(\begin{matrix} Zf_I & \overline{Z}f_I\\
                 Z\overline{f_I}&\overline{Zf_I}\\
                \end{matrix}\right)_p,
\end{equation*}
and the following also hold for a $K$-quasiconformal mapping $f$ which is  $P$-differentiable at $p$ :
\begin{enumerate}
 \item $\|(D_h)f_p\|:=\max\left\{\|(D_h)f_p(V)\|\;|\; \|V\|=1\right\}=|Zf_I(p)|+|\overline{Z}f_I(p)|$ a.e.;
 \item $J_f(p)=\det (D_0)f_p=(\det (D_h)f_p)^2=\left(|Zf_I(p)|^2-|\overline{Z}f_I(p)|^2\right)^2$;
 \item $$
 K_f(p)^2=\frac{\|(D_h)f_p\|^4}{J_f(p)}=\left(\frac{|Zf_I(p)|+|\overline{Z}f_I(p)|}{|Zf_I(p)|-|\overline{Z}f_I(p)|}\right)^2\le K.
 $$
\end{enumerate}
The function $\Omega\ni p\to K_f(p)\in [1,\infty)$ is the {\it distortion function} of $f$ and the constant  $K_f=K^{1/2}$ is also called the {\it maximal distortion} of $f$.

\medskip


A mapping between domains in the Heisenberg group $\fH$ is called absolutely continuous in lines (ACL), if it is absolutely continuous  on almost all fibers of smooth {\it horizontal} fibrations. For such a fibration, the fibers $\gamma_p$ can be parametrised by the flow $f_s$ of a horizontal {\it unit} vector field $V$: i.e., $V$ is of the form $aX+bY$ with $|a|^2+|b|^2=1$. Mostow proved (see Theorem A in \cite{KR2}) that quasiconformal mappings are absolutely continuous on a.e. fiber $\gamma$ of any give fibration $\Gamma_V$ determined by a left invariant horizontal vector field $V$.

\medskip

Beltrami equations are in order. According to Theorem C in \cite{KR2}, if $f=(f_I,f_3)$ is an orientation preseving $K$-quasiconformal mapping between domains $\Omega$ and $\Omega'$ in $\fH$ then it satisfies a.e. the Beltrami type system of equations
\begin{eqnarray}\label{eq:B1}
&&
\overline{Z}f_I=\mu Zf_I,\\
&&\label{eq:B2}
\overline{Z}f_{II}=\mu Zf_{II},
\end{eqnarray}
where $f_{II}=f_3+i|f_I|^2$ and $\mu$ is a complex function in $\Omega$ such that
$$
\frac{1+\|\mu\|_\infty}{1-\|\mu\|_\infty}\le K \quad \text{a.e.}
$$
where $\|\mu\|_\infty={\rm ess sup}\{|\mu(z,t)|\;|\;(z,t)\in\Omega\}$. For each $p=(z,t)\in\Omega$, the function
\begin{equation*}
 \mu(p)=\mu_f(p)=\frac{\overline{Z}f_I(p)}{Zf_I(p)},
\end{equation*}
is called the {\it Beltrami coefficient of $f$}. If $K_f$ is the maximal distortion and $K_f(p)$ is the distortion function of $f$ respectively, then the following hold:
$$
|\mu_f(p)|=\frac{K_f(p)-1}{K_f(p)+1},\quad K_f(p)=\frac{1+|\mu_f(p)|}{1-|\mu_f(p)|},\quad \|\mu_f\|_\infty=\frac{K_f-1}{K_f+1}.
$$
We now state the analytic definition of quasiconformality in $\fH$.

\medskip

\begin{defn}\label{defn:analytic}
{\bf (Analytic definition)} A homeomorphism $f:\Omega\to\Omega'$, $f=(f_I,f_3)$, between domains in $\fH$ is an orientation-preserving quasiconformal mapping if
\begin{enumerate}
\item[{(i)}] it is ACL;
\item[{(ii)}] it is a.e. $P$-differentiable, and
\item[{(iii)}] it satisfies a.e. a system of Beltrami equations of the form \ref{eq:B1}, \ref{eq:B2}
where  $\mu$ is a complex function in $\Omega$ such that $\|\mu\|_\infty<1$.
\end{enumerate}
\end{defn}
An analogous definition holds for orientation reserving quasiconformal mappings.

\medskip

In the present paper we are considering quasiconformal maps with sufficient smoothness; these have to be contact transformations. 
From $P$-differentiability of  quasiconformal maps it follows that $P$-diffeomorphic $K$-quasiconformal mappings are contact transformations satisfying
\begin{equation}\label{cond:dil}
 \|(D_h)f\|^4\le K |J_f|\quad \text{a.e.}
\end{equation}
(Here, the absolute value in the Jacobian covers both situations of orientation-preserving and orientation-reversing mappings). The converse is also true, see Proposition 8 in \cite{KR2}: if a $\mathcal{C}^2$ contact transformation $f$ satisfies condition (\ref{cond:dil}), then $f$ is $K$-quasiconformal. We conclude that $K$-quasiconformal diffeomorphism lie in the class of contact transformations. Due to the contact conditions (\ref{eq:C1}), (\ref{eq:C2}) and (\ref{eq:C3}), equation (\ref{eq:B1}) in the Beltrami system implies equation (\ref{eq:B2}).

\section{Surfaces in the Heisenberg Group}\label{sec:surfaces}
There is a rather large bibliography about surfaces in the Heisenberg group and their geometrical properties; the reader should see for instance \cite{CDPT} for a treatment of {\it hypersurfaces} of $\fH$ and the references given therein. Our treatment in this section is somewhat different, see \cite{P1}; instead of hypersurfaces, i.e., graphs of function with sufficient regularity, we study regular surfaces $\E$ (here, regular means $\calC^2$ regular in the Euclidean sense) via surface patches. Accordingly, we define the horizontal space $\mathbb{H}(\E)$, the horizontal normal $N^h_\E$ and the characteristic locus $\mathfrak{C}(\E)$ of $\E$ in Section \ref{sec:horsurf}. In Section \ref{sec:induced} we show that the pullback of the contact form $\omega$ of $\fH$ in a regular surface $\E$ defines a contact 1-form $\omega_\E$ on $\E$ whose kernel is $\mathbb{J}N_\E^h$; here $\J$ is the natural complex operator acting on $\mathbb{H}(\E)$. The integral curves of $\mathbb{J}N_\E^h$ are horizontal curves lying in 
$\E$. Next, the horizontal area of a regular surface $\E$ is defined in Section \ref{sec:horarea}. The definition here is via surface patches and in the case of hypersurfaces it agrees with the definition given in \cite{CDPT}. Finally, Section \ref{sec:qsurfaces} is devoted to the study of contact $\calC^2$ transformations of $\fH$ which map a regular surface to another. Proposition \ref{N-ineq} and Corollary \ref{cor:ineq} are crucial for our subsequent discussion.

\subsection{Regular Surfaces, Horizontal Normal}\label{sec:horsurf}

For clarity, we recall the notion of  {\it regular surface} of $\R^3$ (see, for instance, \cite{DC}): this is a countable collection of surface patches (local charts) $\sigma_\alpha:U_\alpha\rightarrow V_\alpha\cap\R^3$ where $U_\alpha$ and $V_\alpha$ are open sets of $\R^2$ and $\R^3$, respectively, such that
\begin{enumerate}
 \item each $\sigma_\alpha$ is a $\calC^2$ homeomorphism, and
\item the differential $(\sigma_\alpha)_*:\R^2\rightarrow\R^3$ is of rank 2 everywhere.
\end{enumerate}
The tangent plane $T_\sigma(\E)$ of $\E$ at a surface patch $\sigma$ defined in an open domain $U\subset\R^2$ by
$$
\sigma(u,v)=(x(u,v),y(u,v),t(u,v))
$$
is
$
T_\sigma(\E)={\rm span}\left\{\sigma_u=\sigma_*\frac{\partial}{\partial u},\;\sigma_v=\sigma_*\frac{\partial}{\partial v}\right\}.
$
This may also be defined by the normal vector
\begin{eqnarray*}
N_\sigma=\sigma_u\wedge\sigma_v
=\frac{\partial(y,t)}{\partial(u,v)}\frac{\partial}{\partial x}
+\frac{\partial(t,x)}{\partial(u,v)}\frac{\partial}{\partial y}
+\frac{\partial(x,y)}{\partial(u,v)}\frac{\partial}{\partial t},
\end{eqnarray*}
where $\wedge$ is the exterior product in $\R^3$. We have
$
T_\sigma(\E)=\{V_\sigma\in T_\sigma(\R^3)\;:\;N_\sigma\cdot V_\sigma=0\}
$
where the dot is the usual Euclidean product in $\R^3$.

A regular surface $\E$ is {\it oriented} if  for every two overlapping patches $(U,\sigma)$ and $(\tilde U,\tilde\sigma)$  the transition mapping $\Phi=\sigma^{-1}\circ\tilde\sigma$ has positive Jacobian deterninant $J_\Phi=\det(\Phi_*)$. In $U\cap\tilde U$ we then have
$
N_{\tilde\sigma}^h=J_\Phi N_\sigma^h
$
and the unit normal vector field of $\nu_\E$ of $\E$ is uniquely defined at each local chart by the relation
$$
\nu_\sigma=\frac{\sigma_u\wedge\sigma_v}{|\sigma_u\wedge\sigma_v|},
$$
where $|\cdot|$ is the Euclidean norm in $\R^3$.

\medskip
{\it From now on, by a regular surface in $\fH$ we shall always mean a regular oriented surface in $\R^3$.}
\medskip

\begin{defn}\label{defn-hornorspace}
Let $\E$ be a regular surface and $p\in\E$. The {\sl horizontal plane} $\mathbb{H}_p(\E)$ of $\E$ at $p$ is the horizontal plane $\mathbb{H}_p(\fH)$.
\end{defn}

\medskip

Next, we define the horizontal normal vector $N^h_p$ at an arbitrary $p\in\E$. To do so, we first define
the Heisenberg wedge product  $\wedge^\fH_p$; this is the linear mapping $T_p(\fH)\times T_p(\fH)\to T_p(\fH)$ which assigns to each two vectors $a,b\in T_p(\fH)$, where $a=a_1X+a_2Y+a_3T$ and $b=b_1X+b_2Y+b_3T$,
 the  vector $a\wedge^\fH b\in T_p(\fH)$ which is  given by the formal determinant
\begin{equation*}
a\wedge^\fH b=\left|\begin{matrix}
                 X&Y&T\\
a_1&a_2&a_3\\
b_1&b_2&b_3     \end{matrix}\right|=\left|\begin{matrix}
a_2&a_3\\
b_2&b_3\end{matrix}\right|X+\left|\begin{matrix}
a_3&a_1\\
b_3&b_1\end{matrix}\right|Y+\left|\begin{matrix}
a_1&a_2\\
b_1&b_2\end{matrix}\right|T.
\end{equation*}
One may check that $\wedge^\fH_p$ is skew-symmetric and that following clock rule holds.
\begin{equation*}\label{clock}
 X\wedge^\fH Y=T,\quad Y\wedge^\fH T=X, \quad T\wedge^\fH X=Y.
\end{equation*}

\medskip

\begin{defn}
If $\sigma:U\rightarrow\R^3$ is a surface patch of a regular surface $\E$, the {\sl horizontal normal} $N^h_\sigma$ to $\sigma$ is the horizontal part of
$
\sigma_u\wedge^\fH \sigma_v=\sigma_*\partial/\partial u\wedge^\fH \sigma_*\partial/\partial v,
$
that is,
\begin{equation}\label{hornor2}
N_\sigma^h=(\sigma_u\wedge^\fH \sigma_v)^h=\sigma_u\wedge^\fH \sigma_v-\omega\left(\sigma_u\wedge^\fH \sigma_v\right)T.
\end{equation}
\end{defn}

\medskip

We stress here that the horizontal normal  $N^h_\sigma$ is {\sl not} the horizontal part of the normal $N_\sigma$. Simple calculations induce the following explicit formula:
\begin{equation}\label{defn-hornor}
N^h_\sigma=\left(\frac{\partial(y,t)}{\partial(u,v)}+2y\frac{\partial(x,y)}{\partial(u,v)}\right)X+
\left(\frac{\partial(t,x)}{\partial(u,v)}-2x\frac{\partial(x,y)}{\partial(u,v)}\right)Y.
\end{equation}
The horizontal normal $N_p^h$ at a point $p\in\E$ depends on the choice of the surface patch in the following way: suppose that $(U,\sigma)$ and $(\tilde U,\tilde\sigma)$ are two overlapping patches at $p$. Then if $\Phi=\sigma^{-1}\circ\tilde\sigma$ is the transition mapping, we may find from (\ref{hornor2})  that around $p$ we have
$
N_{\tilde\sigma}^h=J_\Phi N_\sigma^h,
$
where $J_\Phi={\rm det}(\Phi_*)>0$ since  $\E$ is oriented.

\medskip

\begin{defn}
Let $\E$ be a regular surface. A point $p\in\E$ is called {\sl non-characteristic} if $N_p^h\neq 0$. The set of characteristic points
$
\mathfrak{C}(\E)=\{p\in\E\;:\;N_p^h=0\}
$
is called the {\sl characteristic locus} of $\E$.
\end{defn}

\medskip

By definition, the points of $\mathfrak{C}(\E)$ are given in a local chart $(U,\sigma)$ by the equations
\begin{equation*}
\frac{\partial(y,t)}{\partial(u,v)}+2y\frac{\partial(x,y)}{\partial(u,v)}=0\quad\text{and}\quad
\frac{\partial(t,x)}{\partial(u,v)}-2x\frac{\partial(x,y)}{\partial(u,v)}=0.
\end{equation*}
An equivalent, but independent of coordinates  definition of the characteristic locus is given in Proposition \ref{prop-omega-locus} below.

\medskip

If $\sigma$ is a surface patch of $\E$ then the {\sl unit horizontal normal} $\nu^h_\sigma$ to $\sigma$ is defined at non-characteristic points by
\begin{equation}\label{unithornor2}
\nu^h_\sigma=\frac{N^h_\sigma}{\|N^h_\sigma\|} =\frac{(\sigma_u\wedge^\fH \sigma_v)^h}{\|(\sigma_u\wedge^\fH \sigma_v)^h\|},
\end{equation}
where $\|\cdot\|$ denotes the norm of the product $\langle\cdot,\cdot\rangle$ in $\mathbb{\fH}$ (recall that $\|X\|=\|Y\|=1$ and $\langle X,Y\rangle=0$). 

\begin{corollary}
Let $\E$ be a regular surface of $\fH$. Then away from the characteristic locus, (\ref{unithornor2}) defines a nowhere vanishing vector field $\nu^h_\E\in\mathbb{H}(\E)$, such that $\|\nu^h_\E\|=1$.
\end{corollary}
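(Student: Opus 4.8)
The plan is to verify the four asserted properties of $\nu^h_\E$ in turn, the only substantive point being that the local formula (\ref{unithornor2}) is independent of the choice of surface patch, so that the locally defined fields glue to a single global one.

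First I would fix a surface patch $(U,\sigma)$ and a non-characteristic point $p=\sigma(u,v)$. By the definition of the characteristic locus we have $N^h_p\neq 0$ at such a point, and since $N^h_\sigma$ has the explicit $\calC^1$ form (\ref{defn-hornor}), the function $\|N^h_\sigma\|$ is positive and continuous on the non-characteristic part of $U$. Hence the quotient in (\ref{unithornor2}) is well-defined and continuous. By construction $N^h_\sigma$ is the horizontal part of $\sigma_u\wedge^\fH\sigma_v$, so $N^h_\sigma\in\mathbb{H}(\E)$ and therefore $\nu^h_\sigma\in\mathbb{H}(\E)$ as well. The normalisation gives $\|\nu^h_\sigma\|=1$ at once, and unit norm forces $\nu^h_\sigma$ to be nowhere vanishing.

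It then remains to show that these locally defined fields agree on overlaps. Let $(U,\sigma)$ and $(\tilde U,\tilde\sigma)$ be two patches overlapping at a non-characteristic point, with transition map $\Phi=\sigma^{-1}\circ\tilde\sigma$. I would invoke the transformation rule $N^h_{\tilde\sigma}=J_\Phi N^h_\sigma$ already recorded above, with $J_\Phi=\det(\Phi_*)$. Normalising and using $\|J_\Phi N^h_\sigma\|=|J_\Phi|\,\|N^h_\sigma\|$ yields $\nu^h_{\tilde\sigma}=(J_\Phi/|J_\Phi|)\,\nu^h_\sigma$.

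The crux --- and the one place where orientation is used --- is the sign $J_\Phi/|J_\Phi|$. Since $\E$ is by standing assumption a regular \emph{oriented} surface, every transition map satisfies $J_\Phi>0$, so this factor equals $+1$ and $\nu^h_{\tilde\sigma}=\nu^h_\sigma$ throughout the overlap. Thus the local expressions glue to a well-defined global vector field $\nu^h_\E$ on $\E\setminus\mathfrak{C}(\E)$ enjoying all the stated properties. I anticipate no genuine obstacle beyond this orientation bookkeeping, since the real content was already absorbed into the transformation rule $N^h_{\tilde\sigma}=J_\Phi N^h_\sigma$ derived earlier.
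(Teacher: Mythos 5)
Your proof is correct and follows exactly the route the paper intends: the corollary is stated without a separate proof precisely because it is immediate from the transformation rule $N^h_{\tilde\sigma}=J_\Phi N^h_\sigma$ with $J_\Phi>0$ recorded just before it, which is the same gluing-by-orientation argument you give. Your additional verifications (horizontality, unit norm, non-vanishing at non-characteristic points) are routine and consistent with the paper's setup.
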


\medskip

\noindent Associated to the horizontal normal vector field $\nu_\E^h$  there is a horizontal vector field induced by the complex operator $\J$  acting in $\mathbb{H}(\fH)$ by the relations
$
\mathbb{J}X=Y$ and $\mathbb{J}Y=-X.
$
Restricting this action  in the horizontal space  of a regular surface $\E$,  if $\nu^h_\E=\nu_1X+\nu_2Y$ then we have
$$
\J \nu^h_\E=-\nu_2X+\nu_1Y.
$$

\subsection{Induced 1-form, Local Contactomorphisms, Horizontal Flow}\label{sec:induced}
If $\E$ is a regular surface in $\fH$ then a 1-form $\omega_\E$ may be defined in $\E$ in the following manner. Denote by $\iota_\E$ the inclusion map
$
\iota_\E:\E\hookrightarrow\fH,
$
given locally by a parametrisation $\sigma(u,v)=(x(u,v),y(u,v),t(u,v))$. If $\omega=dt+2xdy-2ydx$ is the contact form of $\fH$ then  $\omega_\E=\iota_\E^*\omega$; in the local parametrisation this is given by
\begin{eqnarray*}\label{omegaS}
 \omega_\E=\sigma^*\omega=(t_u+2xy_u-2yx_u) du+(t_v+2xy_v-2yx_v) dv.
\end{eqnarray*}

\medskip

We leave the proof of the following proposition to the reader.

\begin{prop}\label{prop-omega-locus}
The characteristic locus $\mathfrak{C}(\E)$ is the (closed) set of points of $\E$ at which $\omega_\E=0$.
\end{prop}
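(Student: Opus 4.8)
The plan is to work in a local parametrisation $\sigma(u,v)=(x(u,v),y(u,v),t(u,v))$ and to re-express both objects in the left-invariant frame $X,Y,T$. First I would record that $\partial_x=X-2yT$, $\partial_y=Y+2xT$ and $\partial_t=T$, which is immediate from the definitions of $X,Y$. Applying $\sigma_*$ then yields
$$
\sigma_u=x_uX+y_uY+A\,T,\qquad \sigma_v=x_vX+y_vY+B\,T,
$$
where $A=t_u+2xy_u-2yx_u$ and $B=t_v+2xy_v-2yx_v$ are exactly the coefficients of $\omega_\E=A\,du+B\,dv$; indeed $A=\omega(\sigma_u)$ and $B=\omega(\sigma_v)$ because $\omega$ annihilates $X,Y$ and sends $T$ to $1$. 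Consequently $\omega_\E=0$ at a point is precisely the condition $A=B=0$.

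Next I would compute the Heisenberg wedge in this frame via the clock rule, obtaining
$$
\sigma_u\wedge^\fH\sigma_v=(y_uB-y_vA)X+(x_vA-x_uB)Y+\frac{\partial(x,y)}{\partial(u,v)}\,T,
$$
so that by (\ref{hornor2}) the horizontal normal is $N^h_\sigma=(y_uB-y_vA)X+(x_vA-x_uB)Y$. This makes one inclusion transparent: if $A=B=0$ then both coefficients vanish, so $N^h_\sigma=0$ and the point is characteristic; hence $\{\omega_\E=0\}\subseteq\mathfrak{C}(\E)$. It also re-derives the coordinate description of $\mathfrak{C}(\E)$ given just below its definition, now phrased in terms of $A$ and $B$.

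For the reverse inclusion — the step where I expect the regularity hypothesis to be essential — I would avoid solving the linear system in $(A,B)$ directly, since its determinant is $\partial(x,y)/\partial(u,v)$, which may vanish. Instead I assume $N^h_p=0$, i.e. $y_uB-y_vA=0$ and $x_vA-x_uB=0$, and form the tangent vector $B\,\sigma_u-A\,\sigma_v$. Its $T$-component is $BA-AB=0$ automatically, while its $X$- and $Y$-components are exactly the two vanishing characteristic equations; hence $B\,\sigma_u-A\,\sigma_v=0$. Since $\E$ is regular, $\sigma_u$ and $\sigma_v$ are linearly independent, which forces $A=B=0$, i.e. $\omega_\E=0$ at $p$. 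The main obstacle is precisely handling the locus where $\partial(x,y)/\partial(u,v)=0$: the naive inversion of the $2\times2$ system only works off that set, and it is the linear independence of $\sigma_u,\sigma_v$ that closes the argument uniformly. Finally, $\mathfrak{C}(\E)$ is closed because it is the zero set of the continuous horizontal normal $N^h_\E$ (equivalently, of the continuous form $\omega_\E$), which completes the proof.
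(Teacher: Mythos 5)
Your proof is correct, and there is a wrinkle you should know about: the paper offers no proof of this proposition at all (``We leave the proof \dots to the reader''), so there is nothing to compare line by line. Your argument is the natural one, and in fact your key device already occurs elsewhere in the paper. The frame identities $\sigma_u=x_uX+y_uY+A\,T$, $\sigma_v=x_vX+y_vY+B\,T$ with $A=\omega(\sigma_u)$, $B=\omega(\sigma_v)$ are right, your wedge computation reproduces the paper's coordinate formula (\ref{defn-hornor}) for $N^h_\sigma$ via (\ref{hornor2}), and the vector you form satisfies the pointwise identity
$$
\sigma_*\left(B\,\frac{\partial}{\partial u}-A\,\frac{\partial}{\partial v}\right)=\J N^h_\sigma,
$$
since its $X$- and $Y$-components are $-n_2$ and $n_1$ and its $T$-component cancels identically; up to normalisation by $\|N^h_\sigma\|$ this is exactly the vector field $J\mathcal{V}$ of Proposition \ref{integrability-Jn}. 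So $N^h_p=0$ forces $B\sigma_u-A\sigma_v=0$, and the rank-two condition on $\sigma_*$ gives $A=B=0$. You correctly identified the one trap in this exercise: solving the linear system $y_uB-y_vA=0$, $x_vA-x_uB=0$ for $(A,B)$ only works where its determinant $\partial(x,y)/\partial(u,v)$ is nonzero, whereas your argument is uniform across that locus. Two cosmetic points: the closedness claim should be phrased chart-independently, which is immediate since on overlaps $N^h_{\tilde\sigma}=J_\Phi N^h_\sigma$ with $J_\Phi>0$ (and $\omega_\E=\iota_\E^*\omega$ is globally defined), so $\mathfrak{C}(\E)$ is the zero set of a continuous globally defined object; and $\calC^1$ regularity of the patches already suffices for everything you use, the $\calC^2$ hypothesis being needed elsewhere in the paper.
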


\medskip

\begin{defn}\label{def:contact}
Let  $f:\calS\to\tilde\calS$ be a smooth diffeomorphism between regular surfaces $\calS$ and $\tilde\calS$ and outside the characteristic loci of $\calS$ and $\tilde\calS$. The mapping $f$ is called a {\it local contactomorphism of $\calS$ and $\tilde\calS$} if there exists a smooth function $\lambda$ so that
$
f^*\omega_{\tilde\calS}=\lambda\omega_\calS.
$
\end{defn}

\medskip

Since $f$ is a local diffeomorphism, if $\sigma:U\to\R^3$ is a surface patch for $\calS$ then $\tilde\sigma=f\circ\sigma$ is  a surface patch for $\tilde\calS$ (with the possible exception of characteristic points). It follows that $f:\calS\to\tilde\calS$ is a contactomorphism if and only if
\begin{equation}\label{eq:contact-cond}
\omega_{\tilde\sigma}(u,v)=\lambda(u,v)\omega_\sigma(u,v),\quad\text{for almost all}\;\;(u,v)\in U.
\end{equation}

\medskip

 Let now $\gamma:I\rightarrow\E$ be a  {\sl surface curve} on  a regular surface   $\E$,  that is a smooth mapping from an open interval of $\R$ to $\E$. The following proposition gives the condition under which a surface curve is horizontal, i.e., its horizontal tangent $\dot\gamma^h(s)\in\mathbb{H}_{\gamma(s)}(\E)$.

\medskip

\begin{prop}\label{horsurfacecurves}
Suppose that $\sigma:U\rightarrow\fH$ is a surface patch and $\gamma(s)=\sigma(u(s),v(s))$, $s\in I$, is a smooth surface curve (that is, $\tilde\gamma(s)=(u(s),v(s))$ a smooth curve in $U$). Then away from the characteristic locus, $\gamma$ is horizontal if and only if
$
\dot{\tilde\gamma}\in{\rm ker}\;\omega_\E,
$
or equivalently,
\begin{equation*}
 (t_u+2xy_u-2yx_u)\dot u+(t_v+2xy_v-2yx_v)\dot v=0
\end{equation*}
where the dot denotes $d/ds$.
In this case,
\begin{equation*}\label{hor-sur-cur}
 \dot\gamma=(x_u\dot u+x_v\dot v)X+(y_u\dot u+y_v\dot v)Y.
\end{equation*}
\end{prop}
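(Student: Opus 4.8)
The plan is to unwind what horizontality means for $\gamma$ and to transport that condition down to the parameter domain $U$ via the chain rule. By definition a curve in $\fH$ is horizontal exactly when its velocity lies in $\mathbb{H}(\fH)=\ker\omega$, equivalently when $\omega(\dot\gamma)=0$. So the whole proposition reduces to computing the velocity $\dot\gamma$ and isolating its $T$--component, which I claim is precisely $\omega(\dot\gamma)$; everything else is identifying this coefficient with $\omega_\E$ acting on $\dot{\tilde\gamma}$.

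First I would differentiate $\gamma(s)=\sigma(u(s),v(s))=(x,y,t)(u(s),v(s))$ by the chain rule, obtaining $\dot\gamma=\dot x\,\partial_x+\dot y\,\partial_y+\dot t\,\partial_t$ with $\dot x=x_u\dot u+x_v\dot v$ and analogously for $\dot y,\dot t$. Next I would pass from the Euclidean coordinate frame to the left--invariant frame using $\partial_x=X-2y\,T$, $\partial_y=Y+2x\,T$, $\partial_t=T$, which are read off from the definitions $X=\partial_x+2y\partial_t$, $Y=\partial_y-2x\partial_t$, $T=\partial_t$. Collecting terms yields $\dot\gamma=\dot x\,X+\dot y\,Y+(\dot t+2x\dot y-2y\dot x)\,T$, so the horizontal part is $\dot\gamma^h=\dot x\,X+\dot y\,Y$ while the coefficient of $T$ is exactly $\omega(\dot\gamma)=\dot t+2x\dot y-2y\dot x$. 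Hence $\gamma$ is horizontal if and only if this coefficient vanishes.

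It then remains to rephrase $\omega(\dot\gamma)=0$ through the induced form and to record the formula for $\dot\gamma$. Here I would invoke naturality of the pullback: since $\gamma=\sigma\circ\tilde\gamma$ and $\omega_\E=\sigma^*\omega$ in this chart, the chain rule gives $\omega(\dot\gamma)=\omega(\sigma_*\dot{\tilde\gamma})=(\sigma^*\omega)(\dot{\tilde\gamma})=\omega_\E(\dot{\tilde\gamma})$; substituting the chain--rule expressions for $\dot x,\dot y,\dot t$ reproduces $\omega_\E(\dot{\tilde\gamma})=(t_u+2xy_u-2yx_u)\dot u+(t_v+2xy_v-2yx_v)\dot v$, matching the displayed local expression of $\omega_\E$. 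Thus horizontality is equivalent to $\dot{\tilde\gamma}\in\ker\omega_\E$, and when this holds the $T$--component of $\dot\gamma$ is zero, giving $\dot\gamma=\dot\gamma^h=(x_u\dot u+x_v\dot v)X+(y_u\dot u+y_v\dot v)Y$, the asserted formula. The one point needing care is the restriction away from the characteristic locus: by Proposition \ref{prop-omega-locus}, off $\mathfrak{C}(\E)$ the form $\omega_\E$ is nonvanishing, so its kernel is a genuine one--dimensional line and the condition $\dot{\tilde\gamma}\in\ker\omega_\E$ is a nontrivial constraint, whereas on $\mathfrak{C}(\E)$ it would be vacuous. There is no real obstacle; the only content is the routine, sign--sensitive bookkeeping of the frame change from $\partial_x,\partial_y,\partial_t$ to $X,Y,T$ and the observation that the $T$--coefficient of the velocity is literally $\omega$ evaluated on that velocity.
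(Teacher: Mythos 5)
Your proof is correct and follows essentially the same route as the paper: the core step is the naturality chain $\omega(\dot\gamma)=\omega(\sigma_*\dot{\tilde\gamma})=(\sigma^*\omega)(\dot{\tilde\gamma})=\omega_\E(\dot{\tilde\gamma})$, which is exactly the paper's argument. Your explicit frame change $\partial_x=X-2yT$, $\partial_y=Y+2xT$ identifying the $T$--coefficient of $\dot\gamma$ with $\omega(\dot\gamma)$ is a welcome addition, since it also establishes the final formula for $\dot\gamma$, which the paper states but explicitly declines to prove (``We only prove the first statement'').
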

\begin{proof}
We only prove the first statement. We have
\begin{eqnarray*}
\gamma\;\text{horizontal}&\Longleftrightarrow&\omega(\dot\gamma^h)=0\\
&\Longleftrightarrow&\omega(\sigma_*\dot{\tilde\gamma})=0\\
&\Longleftrightarrow&(\sigma^*\omega)(\dot{\tilde\gamma})=0\\
&\Longleftrightarrow&\dot{\tilde\gamma}\in{\rm ker}\;\omega_\E.
\end{eqnarray*}

\end{proof}

\medskip

The proof of the following proposition is in \cite{P1}. For clarity, we also sketch it here.
\medskip

\begin{prop}\label{integrability-Jn}
The 1-form $\omega_\E$ defines an integrable foliation of $\E$ (with singularities at characteristic points) by horizontal surface curves. These curves are tangent to $\J\nu_\E^h$.
\end{prop}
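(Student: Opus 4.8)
The plan is to prove Proposition \ref{integrability-Jn} in two parts: first that $\omega_\E$ defines an integrable foliation by horizontal surface curves, and second that the leaves are tangent to $\J\nu_\E^h$. For the integrability, the key observation is that $\omega_\E$ is a $1$-form on a two-dimensional surface $\E$, so its kernel distribution is one-dimensional away from the characteristic locus $\mathfrak{C}(\E)$ (where by Proposition \ref{prop-omega-locus} we have $\omega_\E=0$). Any line field on a surface is automatically integrable by the classical theory of ordinary differential equations: the integral curves exist and are unique through each non-characteristic point by the Picard--Lindel\"of theorem applied in a local chart $\sigma:U\to\fH$. Concretely, in the chart the condition $\dot{\tilde\gamma}\in{\rm ker}\,\omega_\E$ reads
$$
(t_u+2xy_u-2yx_u)\dot u+(t_v+2xy_v-2yx_v)\dot v=0,
$$
which is a first-order ODE for the curve $\tilde\gamma(s)=(u(s),v(s))$ in $U$; its solutions foliate $U\setminus\sigma^{-1}(\mathfrak{C}(\E))$, and pushing forward by $\sigma$ gives the foliation of $\E$. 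By Proposition \ref{horsurfacecurves}, the surface curves so obtained are precisely the horizontal ones, since being horizontal is equivalent to $\dot{\tilde\gamma}\in{\rm ker}\,\omega_\E$.

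For the second part, I would verify that the horizontal vector field $\J\nu_\E^h$ spans ${\rm ker}\,\omega_\E$, i.e. that it annihilates $\omega$ when viewed as a tangent vector to $\E$. The natural approach is a direct computation in the basis $\{X,Y\}$ of the horizontal distribution. Writing $\nu_\E^h=\nu_1 X+\nu_2 Y$, we have $\J\nu_\E^h=-\nu_2 X+\nu_1 Y$ by the formula established just before Section \ref{sec:induced}. The crucial point is that $\J\nu_\E^h$ is both horizontal (so $\omega(\J\nu_\E^h)=0$ holds trivially, since $X,Y\in{\rm ker}\,\omega$) and genuinely tangent to $\E$. The horizontality gives immediately that the integral curves of $\J\nu_\E^h$ are horizontal curves of $\fH$. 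What remains is to check tangency to $\E$: I would show that $\J\nu_\E^h\in T_p(\E)$ by verifying $N_\sigma\cdot(\J\nu_\E^h)=0$, where $N_\sigma=\sigma_u\wedge\sigma_v$ is the Euclidean normal. This is where the relation between the horizontal normal $N_\sigma^h$ and the Euclidean geometry of $\E$ enters: since $\J$ rotates $\nu_\E^h$ by a quarter turn within the horizontal plane, $\J\nu_\E^h$ is orthogonal (in the sub-Riemannian sense) to $N_\sigma^h$, and one checks using the explicit formula (\ref{defn-hornor}) for $N_\sigma^h$ that this forces $\J\nu_\E^h$ to lie in the Euclidean tangent plane $T_\sigma(\E)$.

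The main obstacle will be the tangency verification in the second part, namely confirming that the horizontal vector $\J\nu_\E^h$, which is defined purely through the sub-Riemannian structure on $\mathbb{H}_p(\E)=\mathbb{H}_p(\fH)$, actually lies in the \emph{Euclidean} tangent plane $T_\sigma(\E)=\{V:N_\sigma\cdot V=0\}$. These are a priori two different planes in $T_p(\fH)$, and their intersection is exactly the line ${\rm ker}\,\omega_\E$ away from $\mathfrak{C}(\E)$. I expect the cleanest route is to show that ${\rm ker}\,\omega_\E=T_\sigma(\E)\cap\mathbb{H}_p(\fH)$ and that this intersection line is precisely $\mathbb{R}\cdot\J\nu_\E^h$; the latter follows because $\nu_\E^h$ is, by construction in (\ref{unithornor2}), the horizontal part of $\sigma_u\wedge^\fH\sigma_v$, and the clock rule for $\wedge^\fH$ together with skew-symmetry shows that applying $\J$ exchanges the normal and tangent directions inside the horizontal plane. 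Since this proof appears in \cite{P1} and the author only sketches it, I would present these computations concisely, relying on Proposition \ref{horsurfacecurves} to identify horizontal surface curves with ${\rm ker}\,\omega_\E$ and reserving the detailed bookkeeping of the wedge-product identities for the reference.
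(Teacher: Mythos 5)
Your proposal is correct, and its first half (integrability is automatic because $\omega_\E$ is a $1$--form on a two--dimensional manifold, with horizontality of the leaves supplied by Proposition \ref{horsurfacecurves}) coincides with the paper's argument. Where you genuinely diverge is in identifying the tangent direction. The paper works intrinsically in the chart: it observes that $\omega_\E=\|N^h_\sigma\|(\alpha\,du+\beta\,dv)$ with $\alpha,\beta$ as in (\ref{alphabeta}), takes the visibly kernel vector field $J\mathcal{V}=\beta\,\partial_u-\alpha\,\partial_v$ on $U$, and verifies by direct computation that $\sigma_*(J\mathcal{V})=\J\nu^h_\E$; tangency to $\E$ is then free (a pushforward of a chart field is tangent by construction), and the proof yields the explicit flow equations $\dot u=\beta$, $\dot v=-\alpha$, which is exactly what Definition \ref{horflow} and the later computations exploit. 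You instead argue extrinsically: $\sigma_*\ker\omega_\E=T_p\E\cap\mathbb{H}_p(\fH)$ (a line off $\mathfrak{C}(\E)$, since $N^h_p\neq 0$ means $T_p\E\neq\mathbb{H}_p(\fH)$), and the burden falls on checking that the horizontal vector $\J\nu^h_\E$ lies in the Euclidean tangent plane. That check, which you rightly flag as the main obstacle, does close: writing $A=\frac{\partial(y,t)}{\partial(u,v)}$, $B=\frac{\partial(t,x)}{\partial(u,v)}$, $C=\frac{\partial(x,y)}{\partial(u,v)}$, so that $n_1=A+2yC$, $n_2=B-2xC$ by (\ref{defn-hornor}), and converting $\J N^h_\sigma=-n_2X+n_1Y$ to Euclidean coordinates via $X=\partial_x+2y\partial_t$, $Y=\partial_y-2x\partial_t$, one finds
\begin{equation*}
N_\sigma\cdot\bigl(\J N^h_\sigma\bigr)=-n_2A+n_1B-2C\,(y\,n_2+x\,n_1)=0
\end{equation*}
identically, which is essentially the same algebra hidden in the paper's ``straightforward calculations.'' The trade--off: the paper's route is shorter and produces the ODE system for the horizontal flow explicitly, while yours buys the cleaner conceptual statement that the flow direction is precisely the intersection of the tangent plane with the horizontal plane, explaining why $\J\nu^h_\E$, rather than $\nu^h_\E$, appears.
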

\begin{proof}
Integrability is immediate: $\omega_\E$ is a 1-form defined in a two--dimensional manifold. 
We set now
\begin{equation}\label{alphabeta}
\alpha=\frac{1}{\|N^h_\sigma\|}(t_u-2yx_u+2xy_u),\quad \beta=\frac{1}{\|N^h_\sigma\|}(t_v-2yx_v+2xy_v),
\end{equation}
where $\|N^h_\sigma\|=\|(\sigma_u\wedge^\fH\sigma_v)^h\|$. If
\begin{equation}\label{JV}
J\mathcal{V}=\beta \frac{\partial}{\partial u}-\alpha \frac{\partial}{\partial v}\in{\rm ker}\omega_\E,
\end{equation}
then straightforward calculations deduce $\sigma_*\mathcal(J\mathcal{V})=\J\nu_\E$.
The integral curves of $\J\nu_\E$ are the solutions of the system of differential equations
$
\dot u=\beta$ and $\dot v=-\alpha.
$
\end{proof}

\medskip

\begin{defn}\label{horflow}
The foliation of $\E$ by the integrable curves of $\J\nu_\E$ is called the {\sl horizontal flow} of $\E$.
\end{defn}

\subsection{Horizontal Area and Horizontal Area Integral}\label{sec:horarea}
In an arbitrary regular surface $\E$, the notion of the area $\A$ is given by integrating at each coordinate neighborhood $(U,\sigma)$ the length of the normal vector $N_\sigma=\sigma_u\times\sigma_v$. Accordingly, we define the {\it horizontal area} (elsewhere called the {\it perimeter}) of $\E$.
\begin{defn}\label{defn:horarea}
Let $\E$ be a regular surface in $\fH$ and suppose  that  $\sigma:U\rightarrow\fH$ is any surface patch. Let $N^h_\sigma=(\sigma_u\wedge^\fH\sigma_v)^h$. If $R$ is a domain in $U$ then, its {\sl horizontal area} is given by
\begin{equation}\label{horarea}
\A^h_\sigma(R)=\iint_R\|N^h_\sigma(u,v)\|dudv.
\end{equation}
\end{defn}
The above integral may of course be infinite; however, assuming that $R$ is contained in a rectangle whose closure lies inside $U$, then the integral is finite. Furthermore, a reparametrisation does not change the value of the integral. Finally, in the case where $\E$ is compact, the horizontal area of $\E$ is well defined and will be denoted by
$$
\A^h(\E)=\iint_\E d\E^h.
$$
Here $d\E^h$ is the {\sl horizontal area element} of $\E$; at each surface patch $(U,\sigma)$,  $$d\E^h=\|N^h_\sigma(u,v)\|dudv.$$

 With the assumptions of Definition \ref{defn:horarea} suppose also that $\rho:\E\to\R$ is a function. The {\it horizontal area integral of $\rho$} in $R$ is defined by
\begin{equation}
\iint_{\sigma(R)}\rho d\E^h=\iint_U\rho(\sigma(u,v))\|N^h_\sigma(u,v)\|dudv,
\end{equation}
if $\rho(\sigma(u,v)\|N^h_\sigma(u,v)\|\in L^1(R)$. Again, a reparametrisation does not change the integral and in the case where $\E$ is compact the horizontal area integral of $\rho$ is  defined globally as follows. Suppose that $\sigma_i:U_i\to\E$, $i\in I$ is a finite covering of $\E$ by surface patches and $\rho\sigma_i\|N^h_{\sigma_i}\|\in L^1(U_i)$ for each $i\in I$. Then
\begin{equation}\label{eq:horintegral}
\iint_\E\rho d\E^h=\sum_{i\in I}\iint_{U_i}\rho(\sigma_i(u_i,v_i))\|N^h_{\sigma_i}(u_i,v_i)du_idv_i.
\end{equation}

\subsection{Regular Surfaces and Contact-Quasiconformal Transformations}\label{sec:qsurfaces}
Let $\E$ and $\widetilde{\E}$ be two regular oriented surfaces in $\fH$. In this work we only consider mappings $\E\rightarrow\widetilde{\E}$ that are induced by $\calC^2$ orientation-preserving contact transformations $f=(f_1,f_2,f_3)$ of $\fH$: $f^*\omega=\lambda\omega$ where $\lambda=J_f^{1/2}>0$. Let  $f$ be such a transformation with the property $f(\E)=\widetilde{\E}$.
Since $f$ is a $\calC^2$ diffeomorphism and both $\E$ and $\widetilde{\E}$ are $\calC^2$ embedded submanifolds of $\fH$, it follows that the restriction $f_\E:\E\rightarrow\widetilde{\E}$ of $f$ to $\E$ is also a $\calC^2$ diffeomorphism between $\calS$ and $\widetilde\calS$. In particular, for every local charts $(U,\sigma)$ and $(\widetilde{U},\tilde\sigma)$ of $\E$ and $\widetilde{\E}$, respectively, the mapping $\tilde\sigma^{-1}\circ f\circ\sigma$ is a $\calC^2$ diffeomorphism in its domain of definition. Being also contact, the transformation $f$ adds something more to this, i.e., the surfaces $\calS$ and $\widetilde\calS$ are locally contactomorphic. 

\medskip

\begin{prop}\label{prop:surfcont}
Let $\E,\widetilde{\E}$ be two regular oriented surfaces in $\fH$ and $f=(f_1,f_2,f_3)$ be a $\calC^2$ orientation-preserving contact transformation of $\fH$ such  $f(\E)=\widetilde{\E}$. Then $\E$ and $\widetilde{\E}$ are locally contactomorphic.
\end{prop}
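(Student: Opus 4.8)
The plan is to reduce everything to the naturality (functoriality) of the pullback, which turns the \emph{global} contact condition $f^*\omega=\lambda\omega$ with $\lambda=J_f^{1/2}>0$ into the \emph{surface} contact condition (\ref{eq:contact-cond}). The geometric input is the commuting square of inclusions: because $f(\E)=\widetilde{\E}$, the restriction $f_\E$ satisfies $\iota_{\widetilde{\E}}\circ f_\E=f\circ\iota_\E$, and we already know from the discussion preceding the statement that $f_\E\colon\E\to\widetilde{\E}$ is a $\calC^2$ diffeomorphism and that $\tilde\sigma=f\circ\sigma$ is a surface patch of $\widetilde{\E}$ whenever $\sigma$ is one of $\E$. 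So it remains only to verify the pullback relation, and I would do this most transparently in a single local chart, matching the framework of (\ref{eq:contact-cond}).

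First I would fix an arbitrary surface patch $\sigma\colon U\to\fH$ of $\E$, set $\tilde\sigma=f\circ\sigma$, and compute the induced $1$--form of $\widetilde{\E}$ in this patch by chaining pullbacks:
\begin{eqnarray*}
\omega_{\tilde\sigma}&=&\tilde\sigma^*\omega=(f\circ\sigma)^*\omega=\sigma^*(f^*\omega)\\
&=&\sigma^*(\lambda\omega)=(\lambda\circ\sigma)\,\sigma^*\omega=(\lambda\circ\sigma)\,\omega_\sigma.
\end{eqnarray*}
This is exactly the contact condition (\ref{eq:contact-cond}) with the multiplier $(u,v)\mapsto\lambda(\sigma(u,v))=J_f^{1/2}(\sigma(u,v))$, which is smooth and strictly positive precisely because $f$ is an orientation preserving contact transformation. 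Invoking the characterisation (\ref{eq:contact-cond}) then shows that $f_\E$ is a local contactomorphism in the sense of Definition \ref{def:contact}, and since the charts cover $\E$ this holds globally on $\E$.

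The one point I would check carefully is that $f_\E$ genuinely carries the non-characteristic part of $\E$ onto the non-characteristic part of $\widetilde{\E}$, so that the map is defined where Definition \ref{def:contact} requires it. This is a direct consequence of the same identity together with Proposition \ref{prop-omega-locus}: writing $f_\E^*\omega_{\widetilde{\E}}=(\lambda\circ\iota_\E)\,\omega_\E$ and evaluating at $p\in\E$, the fact that $(df_\E)_p$ is an isomorphism and $\lambda(p)\neq0$ forces $\omega_{\widetilde{\E}}$ to vanish at $f(p)$ if and only if $\omega_\E$ vanishes at $p$; hence $f_\E\bigl(\mathfrak{C}(\E)\bigr)=\mathfrak{C}(\widetilde{\E})$. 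I do not expect a substantial obstacle: the entire content of the proposition is the naturality of the pullback, so the only things to get right are the bookkeeping of which pullback acts on which form and the observation that positivity and non-vanishing of $\lambda$ are inherited under restriction to $\E$.
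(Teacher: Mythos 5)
Your proposal is correct and follows exactly the paper's route: the paper's entire proof is the commuting identity $f\circ\iota_\E=\iota_{\widetilde{\E}}\circ f_\E$, from which the relation $f_\E^*\omega_{\widetilde{\E}}=(\lambda\circ\iota_\E)\,\omega_\E$ follows by naturality of the pullback, which is precisely the computation you carry out in local patches. Your additional verification that $f_\E$ maps $\mathfrak{C}(\E)$ onto $\mathfrak{C}(\widetilde{\E})$ is a sound and welcome detail that the paper leaves implicit in ``the result follows.''
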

\begin{proof}
 If $\iota_\E$ and $\iota_{\widetilde{\E}}$ are the inclusions of  $\E$ and $\widetilde{\E}$ respectively in $\fH$ then
$
f\circ\iota_\E=\iota_{\widetilde{\E}}\circ f.
$
The result follows.
\end{proof}

\medskip

The next lemma is useful for our subsequent discussion.

\medskip

\begin{lem}\label{lem-Nfs}
Let $\E$ be a regular oriented surface in $\fH$ and $f=(f_1,f_2,f_3)$ be a $\calC^2$ orientation-preserving contact transformation of $\fH$ such that $f^*\omega=\lambda\omega$ where $\lambda=J_f^{1/2}$ and $J_f$ is the Jacobian determinant of $f$. 
Then the following hold.
\begin{enumerate}
\item If $(U,\sigma)$ is a surface patch of $\E$, then $(U,f\circ\sigma)$ is a surface patch for $\widetilde{\E}=f(\E)$.
     \item If $N_\sigma^h=n_1X+n_2Y$ is the horizontal normal vector of $\sigma$, then,
\begin{eqnarray}\label{Ndiff}&&
N_{f\circ\sigma}^h=\lambda\left((n_1Yf_2-n_2Xf_2)X+(n_2Xf_1-n_1Yf_1)Y\right).
\end{eqnarray}
\end{enumerate}
\end{lem}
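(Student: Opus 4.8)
The plan is to treat the two assertions separately, the first being formal and the second the computational core. For part (1) I would argue directly from the definition of an oriented regular patch: since $f$ is a $\calC^2$ diffeomorphism of $\fH$ and $\sigma$ is a $\calC^2$ homeomorphism onto its image with $\sigma_*$ of rank $2$, the composite $f\circ\sigma$ is again a $\calC^2$ homeomorphism onto a piece of $\widetilde{\E}=f(\E)$. By the chain rule $(f\circ\sigma)_*=f_*\circ\sigma_*$, and since $f_*$ is everywhere invertible the differential $(f\circ\sigma)_*$ still has rank $2$; orientation is preserved because $f$ is orientation preserving, so $(U,f\circ\sigma)$ is an admissible oriented patch for $\widetilde{\E}$.

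For part (2) the strategy is to work throughout in the left--invariant frame $X,Y,T$ and to push $\sigma_u,\sigma_v$ forward by $f_*$ before forming the Heisenberg wedge. First I would record the action of $f_*$ on the frame: in the target Euclidean coordinates $f_*X$ has components $(Xf_1,Xf_2,Xf_3)$, and similarly for $f_*Y$ and $f_*T$. Rewriting these in the target left--invariant frame and invoking the contact conditions (\ref{eq:C1})--(\ref{eq:C3}) shows that the vertical parts of $f_*X$ and $f_*Y$ drop out, so that
\begin{equation*}
f_*X=(Xf_1)X+(Xf_2)Y,\qquad f_*Y=(Yf_1)X+(Yf_2)Y,
\end{equation*}
while $f_*T=(Tf_1)X+(Tf_2)Y+\lambda T$, the vertical coefficient being $\lambda=\omega(f_*T)$ by the contact relation $f^*\omega=\lambda\omega$.

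Writing $\sigma_u=x_uX+y_uY+\omega(\sigma_u)T$ and likewise for $\sigma_v$, I would push forward and collect components, obtaining for $\tilde\sigma=f\circ\sigma$ the expression $\tilde\sigma_u=(f_1\circ\sigma)_uX+(f_2\circ\sigma)_uY+\lambda\,\omega(\sigma_u)T$ together with its analogue in $v$; in particular the vertical coefficients scale exactly by $\lambda$, recovering the contactomorphism relation (\ref{eq:contact-cond}). Substituting these into the horizontal part of the Heisenberg wedge --- which, from the determinant definition of $\wedge^\fH$, reads $(a\wedge^\fH b)^h=(a_2b_3-a_3b_2)X+(a_3b_1-a_1b_3)Y$ in frame components --- the terms carrying $Tf_1$ and $Tf_2$ appear multiplied by $\omega(\sigma_u)\omega(\sigma_v)-\omega(\sigma_v)\omega(\sigma_u)=0$ and hence cancel identically. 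What survives is $\lambda$ times a bilinear expression in $(Xf_i,Yf_i)$ and the quantities $x_u\omega(\sigma_v)-x_v\omega(\sigma_u)$ and $y_u\omega(\sigma_v)-y_v\omega(\sigma_u)$, which by (\ref{defn-hornor}) equal $-n_2$ and $n_1$ respectively; this gives exactly the stated formula for $N^h_{f\circ\sigma}$.

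I expect the main obstacle to be purely organisational: keeping the conversions between the Euclidean and left--invariant frames straight, and verifying that the $Tf_i$ terms cancel so that no vertical derivatives of $f$ survive in the final answer. A reassuring consistency check is that the $2\times 2$ matrix appearing in the statement is exactly $\lambda$ times the cofactor matrix of the horizontal differential $(D_h)f$, which is precisely the transformation law one expects for a (horizontal) normal vector.
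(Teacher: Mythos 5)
Your proposal is correct and follows essentially the same route as the paper: part (1) from $f$ restricting to a $\calC^2$ diffeomorphism, and part (2) by writing $f_*$ and $\sigma_*$ in the left--invariant frame $\{X,Y,T\}$ (your vertical components $\omega(\sigma_u),\omega(\sigma_v)$ are exactly the paper's $\alpha\|N^h_\sigma\|,\beta\|N^h_\sigma\|$ from (\ref{alphabeta})), pushing forward by the chain rule, and extracting the horizontal part of the Heisenberg wedge via (\ref{hornor2}), where the $Tf_i$ terms cancel just as you observe. The only cosmetic difference is that the paper leaves the cancellation and the identification of the surviving coefficients with $-n_2$ and $n_1$ implicit in the final computation, whereas you spell them out.
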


\begin{proof}
The proof of (1) is immediate since the restriction of $f$ in $\E$ is a $\calC^2$ diffeomorphism. To prove (2) we first write the matrices of  $f_*$ and $\sigma_*$ with respect to the basis $\{X,Y,T\}$. Those  are
$$
\left(\begin{matrix}
           Xf_1&Yf_1&Tf_1\\
Xf_2&Yf_2&Tf_2\\
0&0&\lambda
          \end{matrix}\right)\quad\text{and}\quad
          \left(\begin{matrix}
          x_u&x_v\\
          y_u&y_v\\
          \alpha\|N^h_\sigma\|&\beta\|N^h_\sigma\| \end{matrix}\right),
$$
respectively, where $\alpha$ and $\beta$ are as in \ref{alphabeta}.
Therefore, from chain rule we have
\begin{eqnarray*}
f_*\sigma_u
&=&\left(x_uXf_1+y_uYf_1+\alpha\|N^h_\sigma\|Tf_1\right)X\\
&&+\left(x_uXf_2+y_uYf_2+\alpha\|N^h_\sigma\|Tf_2\right)Y\\
&&+\lambda\alpha\|N^h_\sigma\|T,
\end{eqnarray*}
and
\begin{eqnarray*}
f_*\sigma_v&=&\left(x_vXf_1+y_vYf_1+\beta\|N^h_\sigma\|Tf_1\right)X\\
&&+\left(x_vXf_2+y_vYf_2+\beta\|N^h_\sigma\|Tf_2\right)Y\\
&&+\lambda\beta\|N^h_\sigma\|T.
\end{eqnarray*}
The desired equation (\ref{Ndiff}) now follows from formula (\ref{hornor2}).
\end{proof}

\medskip

\begin{prop}\label{N-ineq}
With the hypotheses of Lemma \ref{lem-Nfs}, 
in surface patches $(U,\sigma)$ and $(U,f\circ\sigma)$ of $\E$ and $f(\E)$ respectively and at non-characteristic points, the following inequality holds.
\begin{equation}\label{eq-inequality}
 \lambda(|Zf_I|-|\overline{Z}f_I|)\|N^h_{\sigma}\|\le\|N^h_{f\circ\sigma}\|\le\lambda(|Zf_I|+|\overline{Z}f_I|)\|N_{\sigma}^h\|,
\end{equation}
where $\lambda=|Zf_I|^2-|\overline{Z}f_I|^2=J_f^{1/2}$ and $J_f$ is Jacobian determinant  of $f$.
\end{prop}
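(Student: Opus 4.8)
The plan is to read off from Lemma \ref{lem-Nfs} that $N^h_{f\circ\sigma}$ is the image of $N^h_\sigma$ under a single linear endomorphism of the horizontal plane, and then to identify the extremal stretching factors of that endomorphism with the quantities $|Zf_I|\pm|\overline{Z}f_I|$ by a trace/determinant computation. Writing $N^h_\sigma=n_1X+n_2Y$, formula \ref{Ndiff} of Lemma \ref{lem-Nfs} reads
$$
N^h_{f\circ\sigma}=\lambda\,M(N^h_\sigma),\qquad
M=\left(\begin{matrix} Yf_2 & -Xf_2\\ -Yf_1 & Xf_1\end{matrix}\right),
$$
where $M$ is expressed in the orthonormal basis $\{X,Y\}$ of $\mathbb{H}$. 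Since the norm $\|\cdot\|$ on $\mathbb{H}$ is exactly the Euclidean norm in these coordinates, we have $\|N^h_{f\circ\sigma}\|=\lambda\|M(N^h_\sigma)\|$, so it suffices to bound $\|Mv\|/\|v\|$, and the sharp bounds are the singular values of $M$, i.e. the square roots of the eigenvalues of $M^TM$.

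Next I would compute the two invariants of the symmetric matrix $M^TM$. Its determinant is $(\det M)^2=(Xf_1Yf_2-Yf_1Xf_2)^2$, which by the hypothesis of Lemma \ref{lem-Nfs} and the identity $\lambda=|Zf_I|^2-|\overline{Z}f_I|^2$ stated in the proposition equals $\big(|Zf_I|^2-|\overline{Z}f_I|^2\big)^2$. Its trace is $(Xf_1)^2+(Yf_1)^2+(Xf_2)^2+(Yf_2)^2$; substituting $Z=\tfrac12(X-iY)$ and $\overline{Z}=\tfrac12(X+iY)$ into $Zf_I$ and $\overline{Z}f_I$, with $f_I=f_1+if_2$, gives the quadratic identity $|Zf_I|^2+|\overline{Z}f_I|^2=\tfrac12\big((Xf_1)^2+(Yf_1)^2+(Xf_2)^2+(Yf_2)^2\big)$, whence $\mathrm{tr}(M^TM)=2\big(|Zf_I|^2+|\overline{Z}f_I|^2\big)$.

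Finally, the eigenvalues $s_+\ge s_-$ of $M^TM$ are determined by $s_++s_-=2(|Zf_I|^2+|\overline{Z}f_I|^2)$ and $s_+s_-=(|Zf_I|^2-|\overline{Z}f_I|^2)^2$; solving this quadratic yields $s_\pm=(|Zf_I|\pm|\overline{Z}f_I|)^2$. At a non-characteristic point the orientation-preserving hypothesis gives $\lambda>0$, hence $|Zf_I|>|\overline{Z}f_I|$, so both singular values $|Zf_I|+|\overline{Z}f_I|$ and $|Zf_I|-|\overline{Z}f_I|$ of $M$ are nonnegative. The standard min--max bound $s_-^{1/2}\|v\|\le\|Mv\|\le s_+^{1/2}\|v\|$ applied to $v=N^h_\sigma$, multiplied through by $\lambda$, is precisely \ref{eq-inequality}. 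The only genuine computation is the trace identity relating the real horizontal derivatives to $|Zf_I|$ and $|\overline{Z}f_I|$; once that quadratic-form identity is in hand, the rest is the elementary spectral theory of a $2\times2$ symmetric matrix, so I expect no serious obstacle beyond bookkeeping.
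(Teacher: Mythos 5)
Your proof is correct, but it takes a genuinely different route from the paper's. The paper stays in complex notation: setting $m=n_1+\mathrm{i}n_2$, it rewrites formula \ref{Ndiff} as $N^h_{f\circ\sigma}=2\lambda\Re\left((mZf_I-\overline{m}\overline{Z}f_I)\cdot Z\right)$ and extracts the exact pointwise identity $\|N^h_{f\circ\sigma}\|=\lambda\,|Zf_I-e^{-2\mathrm{i}\arg(m)}\overline{Z}f_I|\,\|N^h_\sigma\|$, after which \ref{eq-inequality} is nothing but the triangle inequality. You instead read \ref{Ndiff} as $N^h_{f\circ\sigma}=\lambda M(N^h_\sigma)$ for a cofactor-type matrix $M$ of the horizontal differential and compute its singular values from the invariants $\mathrm{tr}(M^TM)=2(|Zf_I|^2+|\overline{Z}f_I|^2)$ and $\det M=Xf_1Yf_2-Yf_1Xf_2=\lambda=|Zf_I|^2-|\overline{Z}f_I|^2$; both identities check out, and you correctly handle the sign point ($\lambda>0$ forces $|Zf_I|>|\overline{Z}f_I|$, so the smaller singular value is $|Zf_I|-|\overline{Z}f_I|$ with no absolute value needed). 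What each approach buys: yours isolates the structural reason for the inequality, namely that the factors $|Zf_I|\pm|\overline{Z}f_I|$ are exactly the singular values of the horizontal map --- the same extremal stretches appearing in $\|(D_h)f_p\|$ and $K_f(p)$ in Section \ref{sec:quas} --- and the trace/determinant bookkeeping is mechanical. The paper's exact formula, however, carries strictly more information at no extra cost: it records how the stretching factor depends on $\arg(m)$, which is precisely what Corollary \ref{cor:ineq} needs in order to characterize the equality cases ($\arg\mu=2\arg m$ on the left, $\arg\mu=\pi+2\arg m$ on the right) and hence to define the minimal stretching property on which the main theorem rests. To recover that characterization from your argument you would still have to identify the singular directions of $M$ and match them against $N^h_\sigma$, which essentially reproduces the paper's computation; so if you intend to continue to Corollary \ref{cor:ineq}, the complexified identity is the more economical starting point.
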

\begin{proof}
We engage complex terminology and we write $m=n_1+in_2$. In this manner,
\begin{eqnarray*}
 &&
n_1Yf_2-n_2Xf_2=\Re\left(m(Zf_I-Z\overline{f}_I)\right),\\
&&
n_2Xf_1-n_1Yf_1=\Im\left(m(Zf_I+Z\overline{f}_I)\right),
\end{eqnarray*}
and therefore, equation (\ref{Ndiff}) may be written equivalently as
\begin{eqnarray*}
\label{Ndiffcomplex} &&
 N_{f\circ\sigma}^h=2\lambda\Re\left((mZf_I-\overline{m}\overline{Z}f_I)\cdot Z\right)
\end{eqnarray*}
and subsequently,
\begin{eqnarray*}
\label{|Nfs|}&&
 \|N_{f\circ\sigma}^h\|=\lambda|Zf_I-e^{-2i\arg(m)}\overline{Z}f_I|\|N_\sigma^h\|.
\end{eqnarray*}
 Inequality (\ref{eq-inequality}) follows by applying the triangle inequality.
\end{proof}

\medskip

\begin{corollary}\label{cor:ineq}
 With the hypotheses of Proposition \ref{N-ineq}, suppose also that $f$ is quasiconformal with Beltrami coefficient $\mu$. Then:
 \begin{enumerate}
  \item The right inequality in (\ref{eq-inequality}) is attained as an equality if and only if
\begin{equation}\label{right-char}
\mu e^{-2i\arg(m)}<0,\;\text{equivalently}\;\arg{\mu}=\pi+2\arg{m}.
\end{equation}
\item The left inequality in (\ref{eq-inequality}) is attained as an equality if and only if
\begin{equation}\label{left-char}
\mu e^{-2i\arg(m)}>0,\;\text{equivalently}\;\arg{\mu}=2\arg{m}.
\end{equation}
 \end{enumerate}
\end{corollary}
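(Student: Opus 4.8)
The plan is to obtain both equivalences by reading off the equality cases of a single scalar (reverse) triangle inequality from the norm identity already produced in the proof of Proposition \ref{N-ineq}, namely
$$
\|N_{f\circ\sigma}^h\|=\lambda\,\bigl|Zf_I-e^{-2i\arg(m)}\overline{Z}f_I\bigr|\,\|N_\sigma^h\|.
$$
First I would insert the Beltrami relation $\overline{Z}f_I=\mu\,Zf_I$, which holds a.e.\ for the quasiconformal map $f$, and factor out $Zf_I$ to get
$$
\|N_{f\circ\sigma}^h\|=\lambda\,|Zf_I|\,\bigl|1-e^{-2i\arg(m)}\mu\bigr|\,\|N_\sigma^h\|.
$$
Writing $w=e^{-2i\arg(m)}\mu$ and using $|\mu|=|w|$, the right-hand bound of \ref{eq-inequality} equals $\lambda|Zf_I|(1+|w|)\|N_\sigma^h\|$ and the left-hand bound equals $\lambda|Zf_I|(1-|w|)\|N_\sigma^h\|$. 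Hence the right- and left-hand inequalities of \ref{eq-inequality} become equalities exactly when $|1-w|=1+|w|$ and $|1-w|=1-|w|$, respectively, and the whole corollary reduces to characterising these two scalar equalities.

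For the first I would apply the triangle inequality $|1-w|\le 1+|w|$, whose equality case requires $-w$ to be a non-negative real multiple of $1$, equivalently $w$ to be a non-positive real number. For the second I would use the reverse triangle inequality $|1-w|\ge 1-|w|$ (which is meaningful precisely because $|w|=|\mu|<1$), whose equality case forces $w$ to be a non-negative real. Assuming $\mu\neq0$ so that $w\neq0$, these conditions sharpen to $w<0$ and $w>0$ respectively, which are exactly the relations $\mu e^{-2i\arg(m)}<0$ and $\mu e^{-2i\arg(m)}>0$ of \ref{right-char} and \ref{left-char}. Passing from the sign of $w$ to its argument, $w<0$ reads $\arg\mu-2\arg m=\pi$ and $w>0$ reads $\arg\mu-2\arg m=0$, giving the stated expressions in terms of $\arg\mu$ and $\arg m$.

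The computation is mechanical once the identity of Proposition \ref{N-ineq} is in hand, so no step presents a serious obstacle; the only points demanding care are the directions of the two inequalities — a \emph{non-positive} real $w$ yields equality on the right while a \emph{non-negative} real $w$ yields equality on the left — and the degenerate case $\mu=0$. At a conformal point $\mu=0$ one has $w=0$, both bounds in \ref{eq-inequality} collapse to $\lambda|Zf_I|\,\|N_\sigma^h\|$, and both inequalities hold trivially as equalities; the sharp sign characterisations \ref{right-char} and \ref{left-char} are therefore to be read at points where $\mu\neq0$, which is the regime relevant to the stretch map employed later.
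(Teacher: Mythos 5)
Your proposal is correct and follows essentially the same route as the paper: the paper likewise substitutes the Beltrami relation into the identity $\|N^h_{f\circ\sigma}\|=\lambda|Zf_I|\,\bigl|1-\mu e^{-2i\arg(m)}\bigr|\,\|N^h_\sigma\|$ from the proof of Proposition \ref{N-ineq}, rewrites \ref{eq-inequality} as the scalar inequality $\bigl|1-|\mu|\bigr|\le\bigl|1-\mu e^{-2i\arg(m)}\bigr|\le\bigl|1+|\mu|\bigr|$, and concludes from the equality cases of the triangle inequality. Your explicit treatment of the degenerate point $\mu=0$ is a detail the paper leaves implicit, but it does not change the argument.
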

\begin{proof}
If $f$ is quasiconformal with Beltrami coefficient $\mu$, then $\overline{Z}f_I/Zf_I=\mu,$ with $\mu$ essentially bounded by a constant less than 1. Therefore,
 \begin{equation*}
\|N^h_{f\circ\sigma}\|=\lambda|Zf_I||1-\mu e^{-2i\arg(m)}|\cdot\|N_\sigma^h\|
\end{equation*}
and inequality (\ref{eq-inequality}) may be written as
\begin{equation*}
 \left|1-|\mu|\right|\le \left|1-\mu e^{-2i\arg(m)}\right|\le \left|1+|\mu|\right|.
\end{equation*}
The proof follows.
\end{proof}

\medskip

In the case where 
 (\ref{left-char}) holds, we say that $f$ has the {\it minimal stretching property (mSP)} for $\E$.

 \section{Modulus of Surface Families, Modulus Inequality, Modulus method}\label{sec:modulus}
 In this section we define the modulus of a family $\Sigma$ of regular surfaces in $\fH$ (Section \ref{sec:modsurfaces}). The Modulus Inequality (Theorem \ref{thm-modulusinequalities}) is proved in Section \ref{sec:modinequality}. Finally the Modulus Method and the proof of Theorem \ref{thm:modmethod} are in Section \ref{sec:modmethod}.
 
\subsection{Modulus of Surface Families}\label{sec:modsurfaces}
  
By $\Sigma$ we shall denote a family of  regular surfaces in $\fH$. The set ${\rm Adm}(\Sigma)$ comprises of non-negative Borel functions $\rho$ in $\fH$ such that for every $\E\in\Sigma$ we have
$$
\iint_\E\rho d\E^h\ge 1.
$$

\medskip

\begin{defn}\label{defn-modulus}
The modulus of a family $\Sigma$ of regular surfaces in $\fH$  is defined by
$$
{\rm Mod}(\Sigma)=\inf_{\rho\in{\rm Adm}(\Sigma)}\iiint_\fH\rho^{4/3} d\mathcal{L}^3,
$$
where by $d\mathcal{L}^3$ we denote the Lebesgue measure in $\R^3$.
\end{defn}

\medskip

If the infimum is attained by a function $\rho_0\in{\rm Adm}(\Sigma)$, that is
$$
{\rm Mod}(\Sigma)=\iiint_\fH\rho_0^{4/3} d\mathcal{L}^3,
$$
then we call $\rho_0$ an {\it extremal density for $\Sigma$}.

\subsection{The Modulus Inequality}\label{sec:modinequality}
\begin{thm}\label{thm-modulusinequalities}
Let $\Omega$ and $\Omega'$ be domains in $\fH$ and $f:\Omega\rightarrow\Omega'$ be a $\calC^2$ orientation-preserving contact quasiconformal transformation. For any family of oriented regular surfaces inside $\Omega$ we have
\begin{equation}\label{modulusineq1}
{\rm Mod}(f(\Sigma))\le\iiint_\Omega K^{2/3}_f(p)\rho^{4/3}(p)d\mathcal{L}^3(p)\quad\text{for\;each}\quad \rho\in{\rm Adm}(\Sigma).
\end{equation}
 If moreover $K_f$ is the maximal distortion of $f$ ($K_f(p)\le K_f$ for all $p$), then
 \begin{equation}\label{modulusineq2}
  \frac{1}{K_f^{2/3}}{\rm Mod}(\Sigma)\le{\rm Mod}(f(\Sigma))\le K_f^{2/3}{\rm Mod}(\Sigma).
 \end{equation}

\end{thm}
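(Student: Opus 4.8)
The plan is to follow the standard push-forward strategy for modulus inequalities, adapted to horizontal surface integrals: starting from an admissible density for $\Sigma$, I construct from it an admissible density for $f(\Sigma)$ whose $4/3$-energy equals exactly the right-hand side of (\ref{modulusineq1}). First I would fix $\rho\in{\rm Adm}(\Sigma)$; since every surface of $\Sigma$ lies in $\Omega$, I may assume $\rho$ vanishes outside $\Omega$ without affecting admissibility or the energy. Then I would define on $\Omega'$ the function
$$
\tilde\rho(q)=\frac{(\rho\circ f^{-1})(q)}{\big(\lambda\,(|Zf_I|-|\overline{Z}f_I|)\big)\circ f^{-1}(q)},
$$
extended by zero outside $\Omega'$. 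This is Borel and finite a.e.: $f$ is a $\calC^2$ diffeomorphism, and since $f$ is quasiconformal the denominator $\lambda\,(|Zf_I|-|\overline{Z}f_I|)$ is strictly positive, because $\lambda=J_f^{1/2}>0$ and $|\overline{Z}f_I|<|Zf_I|$.

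The first key step is to verify $\tilde\rho\in{\rm Adm}(f(\Sigma))$. For $\E\in\Sigma$ with patch $(U,\sigma)$, Lemma \ref{lem-Nfs}(1) shows $\widetilde{\E}=f(\E)$ is parametrised by $f\circ\sigma$, so
$$
\iint_{\widetilde{\E}}\tilde\rho\,d\widetilde{\E}^h=\iint_U\tilde\rho\big(f(\sigma(u,v))\big)\,\|N^h_{f\circ\sigma}\|\,du\,dv.
$$
Substituting the definition of $\tilde\rho$ cancels the factor $\lambda\,(|Zf_I|-|\overline{Z}f_I|)$ against the lower bound of Proposition \ref{N-ineq}, namely $\|N^h_{f\circ\sigma}\|\ge\lambda\,(|Zf_I|-|\overline{Z}f_I|)\,\|N^h_\sigma\|$, leaving the integrand bounded below by $\rho(\sigma(u,v))\,\|N^h_\sigma\|$ at non-characteristic points; at characteristic points both integrands vanish because $\|N^h\|=0$ there. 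Hence $\iint_{\widetilde{\E}}\tilde\rho\,d\widetilde{\E}^h\ge\iint_\E\rho\,d\E^h\ge 1$, as required.

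The second step is the energy computation. By the definition of the modulus, ${\rm Mod}(f(\Sigma))\le\iiint_{\Omega'}\tilde\rho^{4/3}\,d\mathcal{L}^3$, and the change of variables $q=f(p)$ with $d\mathcal{L}^3(q)=J_f(p)\,d\mathcal{L}^3(p)$ rewrites this as $\iiint_\Omega\tilde\rho^{4/3}(f(p))\,J_f(p)\,d\mathcal{L}^3(p)$. Inserting $J_f=\lambda^2$ and the factorisation $\lambda=(|Zf_I|-|\overline{Z}f_I|)(|Zf_I|+|\overline{Z}f_I|)$, the integrand collapses to
$$
\frac{\rho^{4/3}\,\lambda^{2/3}}{(|Zf_I|-|\overline{Z}f_I|)^{4/3}}=\left(\frac{|Zf_I|+|\overline{Z}f_I|}{|Zf_I|-|\overline{Z}f_I|}\right)^{2/3}\rho^{4/3}=K_f^{2/3}\,\rho^{4/3},
$$
which yields (\ref{modulusineq1}). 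The right-hand inequality of (\ref{modulusineq2}) then follows by bounding $K_f(p)\le K_f$ pointwise and taking the infimum over $\rho\in{\rm Adm}(\Sigma)$; the left-hand inequality follows by applying the right-hand one to the inverse map $f^{-1}$ (again $\calC^2$ contact quasiconformal, with $K_{f^{-1}}=K_f$) and the family $f(\Sigma)$, using $f^{-1}(f(\Sigma))=\Sigma$.

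The step I expect to demand the most care is the admissibility verification: pinning down the exact normalising factor $\lambda\,(|Zf_I|-|\overline{Z}f_I|)$ so that the pointwise normal-stretching bound of Proposition \ref{N-ineq} upgrades to the surface-integral inequality, while confirming that $\tilde\rho$ is a legitimate Borel density and that the characteristic loci, where the horizontal normal degenerates, contribute nothing. Once this normalisation is fixed, the rest is a routine Jacobian change of variables together with the algebraic identity converting the quotient of horizontal singular values into the factor $K_f^{2/3}$; the exponent $4/3=Q/(Q-1)$ is the one dictated by the homogeneous dimension $Q=4$ of $\fH$, in the same way that curve families in \cite{BFP1} carry the exponent $Q$.
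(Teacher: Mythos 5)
Your proposal is correct and follows essentially the same route as the paper: the same pushed-forward density $\rho/\bigl(\lambda(|Zf_I|-|\overline{Z}f_I|)\bigr)\circ f^{-1}$, admissibility via the left-hand inequality of Proposition \ref{N-ineq}, the identical Jacobian change of variables collapsing to $K_f^{2/3}\rho^{4/3}$, and the left half of (\ref{modulusineq2}) obtained by applying the first inequality to $f^{-1}$. Your extra care about Borel measurability and the vanishing of both integrands on the characteristic locus only makes explicit what the paper leaves implicit.
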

\begin{proof}
For every $\rho\in{\rm Adm}(\Sigma)$ we define a non-negative Borel function in $\Omega'$ by the relation
\begin{equation*}\label{rhoprime}
\rho'=\left\{\begin{matrix}\frac{\rho}{\lambda(|Zf_I|-|\overline{Z}f_I|)}\circ f^{-1}&\;\text{in}\;\Omega,\\
\\ 0&\;\text{elsewhere},\end{matrix}\right.
\end{equation*}
where $\lambda=|Zf_I|^2-|\overline{Z}f_I|^2=J_f^{1/2}$. Then for each $\E\in\Sigma$ with $f(\E)=\E'$ we have by the left hand side of inequality (\ref{eq-inequality}) that
\begin{eqnarray*}
\iint_{\E'}\rho' d\E'^h\ge\iint_\E\rho d\E^h.
\end{eqnarray*}
Therefore by changing the variables $q=f(p)$ we obtain
\begin{eqnarray*}
\iiint_{\Omega'}(\rho')^{4/3}(q)d\mathcal{L}^3(q)&=&\iiint_{\Omega}(\rho'(f(p))^{4/3}J_f(p)d\mathcal{L}^3(p)\\
&=&\iiint_\Omega\rho^{4/3}(p)\left(\frac{|Zf_I(p)|+|\overline{Z}f_I(p)|}{|Zf_I(p)|-|\overline{Z}f_I(p)|}\right)^{2/3}d\mathcal{L}^3(p)\\
&=&\iiint_\Omega K^{2/3}_f(p)\rho^{4/3}(p)d\mathcal{L}^3(p).
\end{eqnarray*}
By taking the infimum over all functions in ${\rm Adm}(f(\Sigma))$ we obtain (\ref{modulusineq1}). Also the right hand side of (\ref{modulusineq2}) is obtained by (\ref{modulusineq1}) and the relation
$$
K_f(p)\le K_f\quad\text{for\; all}\quad p\in\Omega.
$$
To obtain the left hand side of (\ref{modulusineq2}) we consider the inverse transformation $f^{-1}:\Omega'\rightarrow\Omega$ which is also quasiconformal with maximal distortion $K_f$. Thus, by applying (\ref{modulusineq1}) we have
\begin{eqnarray*}
{\rm Mod}(\Sigma)={\rm Mod}(f^{-1}(\Sigma'))&\le&\iiint_{\Omega'}K^{2/3}_f(q)\rho^{4/3}(q)d\mathcal{L}^3(q)\\
&\le &K^{2/3}_f\iiint_{\Omega'}\rho^{4/3}(q)d\mathcal{L}^3(q)\quad\text{for\;all}\quad\rho\in{\rm Adm}(\Sigma')
\end{eqnarray*}
and the inequality follows after taking the infimum over all $\rho\in{\rm Adm}(\Sigma')$.
\end{proof}

\medskip

\begin{corollary}
 The modulus of surface families is a conformal invariant. 
\end{corollary}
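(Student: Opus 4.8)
The plan is to obtain this as an immediate consequence of the two-sided Modulus Inequality \ref{modulusineq2} established in Theorem \ref{thm-modulusinequalities}. By a \emph{conformal} transformation of $\fH$ I mean a $\calC^2$ orientation preserving contact transformation $f:\Omega\to\Omega'$ that is $1$-quasiconformal; equivalently, its Beltrami coefficient vanishes identically, so that $\overline{Z}f_I\equiv 0$ and the distortion function satisfies $K_f(p)\equiv 1$ at every non characteristic point. In particular its maximal distortion is $K_f=1$.

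First I would substitute $K_f=1$, hence $K_f^{2/3}=1$, into inequality \ref{modulusineq2}. This collapses the two bounds to
$$
{\rm Mod}(\Sigma)\le{\rm Mod}(f(\Sigma))\le{\rm Mod}(\Sigma),
$$
and since the extreme members coincide we are forced to conclude ${\rm Mod}(f(\Sigma))={\rm Mod}(\Sigma)$ for every family $\Sigma$ of oriented regular surfaces inside $\Omega$.

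There is no genuine obstacle here: all of the work has already been carried out in proving Theorem \ref{thm-modulusinequalities}. The only point worth emphasising is that both halves of \ref{modulusineq2} are genuinely needed. The right-hand inequality yields ${\rm Mod}(f(\Sigma))\le{\rm Mod}(\Sigma)$ directly, whereas the matching lower bound ${\rm Mod}(\Sigma)\le{\rm Mod}(f(\Sigma))$ rests on the observation (already used in the proof of the theorem) that the inverse $f^{-1}:\Omega'\to\Omega$ is again $\calC^2$, contact, and $1$-quasiconformal, so that applying \ref{modulusineq1} to $f^{-1}$ produces the reverse estimate.
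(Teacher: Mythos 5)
Your argument is correct and is exactly the one the paper intends: the corollary is stated without proof as an immediate consequence of the two-sided inequality \ref{modulusineq2} from Theorem \ref{thm-modulusinequalities}, which collapses to the equality ${\rm Mod}(f(\Sigma))={\rm Mod}(\Sigma)$ when $K_f=1$. Your added remark that the lower bound comes from applying \ref{modulusineq1} to the inverse $f^{-1}$ matches the mechanism already used in the paper's proof of that theorem, so nothing further is needed.
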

\subsection{The Modulus Method--Proof of Theorem \ref{thm:modmethod}}\label{sec:modmethod}

In this section we prove Theorem \ref{thm:modmethod}. We need the following two lemmas.

\begin{lem}\label{lem:mod1}
Let $\Omega$ be a domain in $\fH$, $J\subset\R$ and $U\subset\R^2$ be open sets. Suppose that
$
\Phi:U\times J\rightarrow \Omega$, $((u,v),\tau)\to\Phi(u,v,\tau)$
is a diffeomorphism which foliates $\Omega$ so that the following properties hold.
\begin{enumerate}
 \item For every $\tau\in J$, the surface patch $\sigma_\tau:U\rightarrow\Omega$ defined by  $\sigma_\tau(u,v)=\Phi(u,v,\tau)$ is  regular.
\item  The horizontal normal $N^h_{\sigma_\tau}$ of each $\sigma_\tau$  is not zero a.e..
\item There is a decomposition of the Lebesgue measure
$$
d\mathcal{L}^3(\Phi(u,v,\tau))=\|N^h_{\sigma_\tau}(u,v)\|^{4/3}dudvd\mu(\tau),
$$
where $d\mu(\tau)$ is a measure on $J$.
\end{enumerate}
Then, the function
\begin{equation*}
 \rho_0(p)=\left\{\begin{matrix}(|U|\cdot\|N^h_{\sigma_\tau}(u,v)\|)^{-1}\;&\;p=\sigma_\tau(u,v),\\
 \\
                   0\;&\;\text{elsewhere},
                  \end{matrix}\right.
\end{equation*}
where $|U|={\rm Area}(U)$, is extremal for the surface family $\Sigma_0=\{\sigma_\tau\;|\;\tau\in J\}$ and moreover
\begin{equation*}
 {\rm Mod}(\Sigma_0)=|U|^{-1/3}\int_J d\mu(\tau).
\end{equation*}
\end{lem}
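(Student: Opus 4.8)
The plan is to verify two things: first, that the proposed $\rho_0$ is admissible for $\Sigma_0$, and second, that it actually attains the infimum in the definition of the modulus. For admissibility, I would compute the horizontal area integral of $\rho_0$ over an arbitrary leaf $\sigma_\tau$. Using the definition of the horizontal area integral from \eqref{eq:horintegral}, together with the explicit form of $\rho_0$ restricted to the leaf $\sigma_\tau$, the weight $\|N^h_{\sigma_\tau}(u,v)\|$ in the integrand cancels exactly against the denominator of $\rho_0$, leaving
$$
\iint_{\sigma_\tau}\rho_0\,d\calS^h=\iint_U\frac{1}{|U|\cdot\|N^h_{\sigma_\tau}(u,v)\|}\cdot\|N^h_{\sigma_\tau}(u,v)\|\,du\,dv=\frac{1}{|U|}\iint_U du\,dv=1.
$$
Hence $\rho_0\in{\rm Adm}(\Sigma_0)$, and in fact the admissibility inequality holds with equality on every leaf, which is exactly what one expects of an extremal density.

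Next I would compute the value $\iiint_\Omega\rho_0^{4/3}\,d\calL^3$. This is where hypothesis (3) does the essential work: the decomposition $d\calL^3(\Phi(u,v,\tau))=\|N^h_{\sigma_\tau}(u,v)\|^{4/3}\,du\,dv\,d\mu(\tau)$ is tailored so that raising $\rho_0$ to the power $4/3$ produces exactly the reciprocal of the Jacobian factor appearing in the measure. Substituting,
$$
\iiint_\Omega\rho_0^{4/3}\,d\calL^3=\int_J\!\iint_U\left(|U|\cdot\|N^h_{\sigma_\tau}(u,v)\|\right)^{-4/3}\|N^h_{\sigma_\tau}(u,v)\|^{4/3}\,du\,dv\,d\mu(\tau)=|U|^{-4/3}\!\int_J\!\iint_U du\,dv\,d\mu(\tau),
$$
and since $\iint_U du\,dv=|U|$, this collapses to $|U|^{-1/3}\int_J d\mu(\tau)$. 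So if $\rho_0$ is extremal, the stated formula for ${\rm Mod}(\Sigma_0)$ follows immediately.

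The genuine content of the lemma — and the step I expect to be the main obstacle — is the lower bound: showing that \emph{no} admissible density can do better, i.e. $\iiint_\Omega\rho^{4/3}\,d\calL^3\ge|U|^{-1/3}\int_J d\mu(\tau)$ for every $\rho\in{\rm Adm}(\Sigma_0)$. The natural route is a Hölder-inequality argument applied leaf by leaf, exactly as in the classical theory of moduli of curve families. For a fixed $\tau$, admissibility gives $\iint_U\rho(\sigma_\tau(u,v))\|N^h_{\sigma_\tau}\|\,du\,dv\ge 1$. I would apply Hölder with conjugate exponents $4/3$ and $4$ to the product of $\rho(\sigma_\tau)\|N^h_{\sigma_\tau}\|^{3/4}$ against $\|N^h_{\sigma_\tau}\|^{1/4}$, so that the $\rho$-factor appears to the power $4/3$ (matching the modulus integrand) and the leftover weights reorganise into the measure factor from hypothesis (3). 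This yields a pointwise-in-$\tau$ bound of the form $\iint_U\rho^{4/3}(\sigma_\tau)\|N^h_{\sigma_\tau}\|^{4/3}\,du\,dv\ge |U|^{-1/3}$ after using that $\iint_U\|N^h_{\sigma_\tau}\|\,du\,dv$ is controlled appropriately; integrating this inequality against $d\mu(\tau)$ and invoking the Fubini-type decomposition (3) recovers precisely $\iiint_\Omega\rho^{4/3}\,d\calL^3\ge|U|^{-1/3}\int_J d\mu(\tau)$. The delicate point is to arrange the exponents in Hölder so that the weights recombine \emph{exactly} into the measure decomposition with no residual factor, and to confirm that equality in Hölder is achieved precisely by the proposed $\rho_0$ (which happens when $\rho$ is constant along each leaf in the relevant weighted sense), thereby certifying both the lower bound and the extremality claim simultaneously.
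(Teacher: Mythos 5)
Your overall plan is exactly the paper's proof: verify admissibility of $\rho_0$ by the cancellation of $\|N^h_{\sigma_\tau}\|$, compute $\iiint_\Omega\rho_0^{4/3}\,d\calL^3=|U|^{-1/3}\int_J d\mu(\tau)$ via hypothesis (3) to get the upper bound, and obtain the matching lower bound by a leaf-by-leaf H\"older argument integrated against $d\mu$. The first two computations are correct and match the paper verbatim.

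However, the concrete H\"older factorization you propose for the lower bound does not work, and it fails precisely at the point you flagged as delicate. Splitting $\rho\|N^h_{\sigma_\tau}\|=\bigl(\rho\|N^h_{\sigma_\tau}\|^{3/4}\bigr)\cdot\|N^h_{\sigma_\tau}\|^{1/4}$ with exponents $(4/3,4)$ yields
$$
1\le\left(\iint_U\rho^{4/3}\|N^h_{\sigma_\tau}\|\,du\,dv\right)^{3/4}\left(\iint_U\|N^h_{\sigma_\tau}\|\,du\,dv\right)^{1/4},
$$
which is doubly wrong for your purposes: the first factor carries the weight $\|N^h_{\sigma_\tau}\|$ to the power $1$ rather than $4/3$, so it does not recombine with the measure decomposition of hypothesis (3), and the second factor is the horizontal area of the leaf, which is not controlled by any hypothesis of the lemma (nothing bounds $\iint_U\|N^h_{\sigma_\tau}\|\,du\,dv$, and the target constant is $|U|^{-1/3}$, not a power of the leaf area). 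The correct split --- the one the paper uses --- is the trivial pairing of $\rho\|N^h_{\sigma_\tau}\|$ against the constant function $1$:
$$
1\le\iint_U\rho\|N^h_{\sigma_\tau}\|\,du\,dv\le\left(\iint_U\rho^{4/3}\|N^h_{\sigma_\tau}\|^{4/3}\,du\,dv\right)^{3/4}|U|^{1/4},
$$
which gives $\iint_U\rho^{4/3}\|N^h_{\sigma_\tau}\|^{4/3}\,du\,dv\ge|U|^{-1/3}$ with exactly the integrand appearing in hypothesis (3); integrating over $J$ against $d\mu$ then finishes the proof. Note also that with this split the H\"older equality case is $\rho\|N^h_{\sigma_\tau}\|$ constant on each leaf, which is precisely what $\rho_0\|N^h_{\sigma_\tau}\|=|U|^{-1}$ satisfies, confirming extremality as you anticipated. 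So your proposal needs only this one repair, but as written the key inequality would not close.
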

\begin{proof}
The density $\rho_0$ is admissible for $\Sigma_0$:
$$
\iint_U\rho_0(\sigma_\tau(u,v))\|N^h_{\sigma_\tau}(u,v)\|dudv=|U|^{-1}\iint_Ududv=1
$$
and furthermore
\begin{eqnarray*}
\iiint_\Omega\rho_0^{4/3}(p)d\mathcal{L}^3(p)&=&\int_J\left(\iint_U\rho_0^{4/3}(\sigma_\tau(u,v))\|N^h_{\sigma_\tau}(u,v)\|^{4/3}dudv\right)d\mu(\tau)\\
&=&|U|^{-1/3}\int_Jd\mu(\tau).
\end{eqnarray*}
Thus
$$
{\rm Mod}(\Sigma_0)\le|U|^{-1/3}\int_Jd\mu(\tau).
$$
For the reverse equality, we consider an arbitrary density $\rho$ such that
$$
1\le\iint_U\rho\|N^h_{\sigma_\tau}(u,v)\|dudv
$$
and we apply H\"older's inequality:
\begin{eqnarray*}
1&\le&\left(\iint_U\rho^{4/3}\|N^h_{\sigma_\tau}(u,v)\|^{4/3}dudv\right)^{3/4}\left(\iint_U dudv\right)^{1/4}\\
\therefore \quad \iint_U\rho^{4/3}\|N^h_{\sigma_\tau}(u,v)\|^{4/3}dudv&\ge &|U|^{-1/3},\\
\therefore \quad \iiint_\Omega \rho^{4/3}d\mathcal{L}^3&\ge &|U|^{-1/3}\int_Jd\mu(\tau).
\end{eqnarray*}
The result follows after taking the infimum over all $\rho\in{\rm Adm}(\Sigma_0)$.
\end{proof}

\begin{lem}\label{lem:mod2}
 Let $f_0:\Omega\to\Omega'$ be an orientation-preserving quasiconformal diffeomorphism between domains in $\fH$. Let $\Phi$ be a foliation of $\Omega$ and $\Sigma_0$ be a family of surfaces as in Lemma \ref{lem:mod2}. Assume in addition that $f_o$ fas the mSP for the family $\Sigma_0$
and that
$$
K_{f_0}(\sigma((u,v),\tau))\equiv K_{f_0}(\tau)
$$
for all $((u,v),\tau)\in U\times J$. Then
$$
{\rm Mod}(f_0(\Sigma_0))=|U|^{-1/3}\int_J K_{f_0}^{2/3}(\tau)d\mu(\tau)=\iiint_\Omega K_{f_0}^{2/3}(p)\rho_0^{4/3}(p)d\calL^3(p).
$$
\end{lem}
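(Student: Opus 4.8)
The plan is to recognise the image family $f_0(\Sigma_0)$ as another foliation of the type treated in Lemma \ref{lem:mod1}, with a suitably reweighted leaf measure, so that Lemma \ref{lem:mod1} computes ${\rm Mod}(f_0(\Sigma_0))$ \emph{exactly}. Since $f_0$ is a $\calC^2$ diffeomorphism, the composition $\tilde\Phi=f_0\circ\Phi:U\times J\to\Omega'$ is again a diffeomorphism foliating $\Omega'$, and by Lemma \ref{lem-Nfs}(1) each $\tilde\sigma_\tau=f_0\circ\sigma_\tau$ is a regular surface patch, so $f_0(\Sigma_0)=\{\tilde\sigma_\tau\;|\;\tau\in J\}$. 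Hypothesis (1) of Lemma \ref{lem:mod1} is then immediate; hypotheses (2) and (3) will follow once I express $\|N^h_{\tilde\sigma_\tau}\|$ in terms of $\|N^h_{\sigma_\tau}\|$.

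The key computation uses the mSP. Since $f_0$ has the mSP for $\Sigma_0$, the left inequality of \ref{eq-inequality} is an equality by Corollary \ref{cor:ineq}, so
\[
\|N^h_{\tilde\sigma_\tau}\|=\lambda(|Zf_I|-|\overline{Z}f_I|)\,\|N^h_{\sigma_\tau}\|.
\]
Writing $L=|Zf_I|-|\overline{Z}f_I|$ and using $\lambda=(|Zf_I|+|\overline{Z}f_I|)L=K_{f_0}L^2$, one finds $\lambda L=\lambda^{3/2}K_{f_0}^{-1/2}$, hence $\|N^h_{\tilde\sigma_\tau}\|^{4/3}=\lambda^2 K_{f_0}^{-2/3}\|N^h_{\sigma_\tau}\|^{4/3}$. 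In particular $N^h_{\tilde\sigma_\tau}\neq 0$ a.e. (this is hypothesis (2) for $\tilde\Phi$), because $\lambda>0$, $K_{f_0}<\infty$, and $N^h_{\sigma_\tau}\neq 0$ a.e. Combining the change of variables $d\mathcal{L}^3(f_0(p))=J_{f_0}(p)\,d\mathcal{L}^3(p)=\lambda^2\,d\mathcal{L}^3(p)$ with hypothesis (3) of Lemma \ref{lem:mod1} for the original foliation gives
\[
d\mathcal{L}^3(\tilde\Phi(u,v,\tau))=\lambda^2\|N^h_{\sigma_\tau}\|^{4/3}\,du\,dv\,d\mu(\tau)=K_{f_0}^{2/3}\,\|N^h_{\tilde\sigma_\tau}\|^{4/3}\,du\,dv\,d\mu(\tau).
\]
Here the leafwise constancy $K_{f_0}(\sigma_\tau(u,v))\equiv K_{f_0}(\tau)$ is decisive: it lets me pull the factor $K_{f_0}^{2/3}(\tau)$ out of the $(u,v)$-fibre and absorb it into the transverse measure. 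Setting $d\tilde\mu(\tau)=K_{f_0}^{2/3}(\tau)\,d\mu(\tau)$, a genuine positive measure on $J$, the last display is precisely hypothesis (3) of Lemma \ref{lem:mod1} for $\tilde\Phi$.

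Lemma \ref{lem:mod1} now applies verbatim to $\tilde\Phi$ and $f_0(\Sigma_0)$ and yields
\[
{\rm Mod}(f_0(\Sigma_0))=|U|^{-1/3}\int_J d\tilde\mu(\tau)=|U|^{-1/3}\int_J K_{f_0}^{2/3}(\tau)\,d\mu(\tau),
\]
the first asserted identity. The second identity is a direct unwinding: inserting $\rho_0(\sigma_\tau(u,v))=(|U|\,\|N^h_{\sigma_\tau}\|)^{-1}$ and hypothesis (3) for the original foliation, the factors $\|N^h_{\sigma_\tau}\|^{\pm 4/3}$ cancel, and the leafwise constancy of $K_{f_0}$ together with $\iint_U du\,dv=|U|$ reproduces $|U|^{-1/3}\int_J K_{f_0}^{2/3}(\tau)\,d\mu(\tau)$. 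I expect the genuine obstacle to be the lower bound implicit in the equality: the Modulus Inequality (Theorem \ref{thm-modulusinequalities}) applied with $\rho_0$ only delivers ${\rm Mod}(f_0(\Sigma_0))\le\iiint_\Omega K_{f_0}^{2/3}\rho_0^{4/3}\,d\calL^3$, so the real content is that the mSP and the constancy of the distortion along leaves are exactly the two conditions forcing equality, by re-presenting $f_0(\Sigma_0)$ as a foliation of the kind governed by Lemma \ref{lem:mod1}.
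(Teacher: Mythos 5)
Your proof is correct, but it follows a genuinely different route from the paper's. The paper proves the equality in two separate halves: for the lower bound it takes an arbitrary $\rho'\in{\rm Adm}(f_0(\Sigma_0))$, uses the mSP to rewrite admissibility on the image leaf $\tilde\sigma=f_0\circ\sigma$ as a weighted integral over the original leaf, applies H\"older's inequality fibrewise, multiplies by $K_{f_0}^{2/3}(\tau)$ (this is where leafwise constancy enters) and integrates in $\tau$ before changing variables; for the upper bound it exhibits the explicit test density $\rho_0'=\bigl(|U|\,J_{f_0}^{1/2}(|Z(f_0)_I|-|\overline{Z}(f_0)_I|)\,\|N^h_\sigma\|\bigr)^{-1}$; and it finishes by evaluating $\iiint_\Omega K_{f_0}^{2/3}\rho_0^{4/3}\,d\calL^3$ directly. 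You instead re-present $f_0(\Sigma_0)$ as a foliation $\tilde\Phi=f_0\circ\Phi$ of $\Omega'$ verifying all three hypotheses of Lemma \ref{lem:mod1} with the reweighted transverse measure $d\tilde\mu=K_{f_0}^{2/3}(\tau)\,d\mu(\tau)$ --- your identity $(\lambda L)^{4/3}=\lambda^2K_{f_0}^{-2/3}$, $L=|Zf_I|-|\overline{Z}f_I|$, is exactly the algebra the paper redoes by hand --- and then invoke Lemma \ref{lem:mod1} once, which delivers the lower and upper bounds simultaneously; so your closing worry about the ``lower bound implicit in the equality'' is already discharged by the reduction, since Lemma \ref{lem:mod1} asserts extremality, not just admissibility. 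What your route buys is economy and a conceptual explanation of why mSP plus leafwise constancy are precisely the right hypotheses (they are what make hypothesis (3) hold for the image foliation); what the paper's route buys is self-containedness, never needing to reassemble the image family as a global diffeomorphic foliation (automatic here, since $f_0$ is a diffeomorphism, but worth saying explicitly as you do). The two arguments are computationally the same at bottom: the extremal density that Lemma \ref{lem:mod1} produces for $\tilde\Phi$, namely $\bigl(|U|\,\|N^h_{\tilde\sigma_\tau}\|\bigr)^{-1}$, coincides with the paper's $\rho_0'$ via the mSP equality. One point deserving a sentence in a final write-up: Proposition \ref{N-ineq} gives the mSP equality only at non-characteristic points, but by Lemma \ref{lem-Nfs} both horizontal normals vanish together on the characteristic locus, and hypothesis (2) of Lemma \ref{lem:mod1} makes that set negligible, so your a.e.\ verifications of hypotheses (2) and (3) for $\tilde\Phi$ go through.
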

\begin{proof}
 Let $\rho'\in{\rm Adm}(f_0(\Sigma_0))$ be an arbitrary density and let $\tilde\sigma=f_0\circ\sigma$. For convenience we shall write $({\bf u},\tau)$ instead of $((u,v),\tau)$. We have
\begin{eqnarray*}
 1&\le&\iint_U\rho'(\tilde\sigma({\bf u},\tau))\|N^h_{\tilde\sigma}({\bf u})\|dudv\\
&=&\iint_U \rho'(\tilde\sigma({\bf u},\tau)))J_{f_0}^{1/2}(\sigma({\bf u},\tau))(|Z(f_0)_I|-|\overline{Z}(f_0)_I|)_{\sigma({\bf u},\tau)}\|N^h_{\sigma}({\bf u})\|dudv,
\end{eqnarray*}
since $f_0$ has the mSP for the family $\Sigma_0$. By applying H\"older's inequality we have
\begin{eqnarray*}
&&
|U|^{-1/3}\le \iint_U (\rho'(\tilde\sigma({\bf u},\tau)))^{4/3}J_{f_0}^{2/3}(\sigma({\bf u},\tau))(|Z(f_0)_I|-|\overline{Z}(f_0)_I|)_{\sigma({\bf u},\tau)}^{4/3}\|N^h_{\sigma}({\bf u})\|^{4/3}dudv.
\end{eqnarray*}

By taking advantage of the assumption that the distortion function is constant along the surfaces of $\Sigma_0$, we multiply both sides of the inequality by $ K_{f_0}(\tau)^{2/3}$ and integrate afterwards over $J$ with respect to $\mu$. Then,
$$
 |U|^{-1/3}\int_J K_{f_0}^{2/3}(\tau)d\mu(\tau)\le\int_J\iint_U(\rho'(f_0\circ\sigma)((u,v),\tau)))^{4/3}J_{f_0}(\sigma({\bf u},\tau))\|N^h_{\sigma}({\bf u})\|^{4/3}dudvd\mu(\tau),
$$
and by changing the variables, this yields
$$
|U|^{-1/3}\int_J K_{f_0}^{2/3}(\tau)d\mu(\tau)\le\iiint_{\Omega}(\rho'(f_0(p)))^{4/3}J_{f_0}(p)d\mathcal{L}^3(p)=\iiint_{\Omega'}(\rho'(q))^{4/3}d\mathcal{L}^3(q).
$$
Since $\rho'$ is arbitrary, we obtain
$$
{\rm Mod}(f_0(\Sigma_0))\ge|U|^{-1/3}\int_J K_{f_0}^{2/3}(\tau)d\mu(\tau).
$$

We consider now the  density $\rho_0'$ given by
$$
\rho'(q)=\left\{\begin{matrix}(|U|(J_{f_0}(\sigma({\bf u},\tau)))^{1/2}(|Z(f_0)_I|-|\overline{Z}(f_0)_I|)_{\sigma({\bf u},\tau))}\|N^h_{\sigma}({\bf u})\|)^{-1}\;&\;q=f(p),\;p=\sigma({\bf u},\tau),\\
 \\
                   0\;&\;\text{elsewhere}.
                  \end{matrix}\right.
$$
It is admissible for $f_0(\Sigma_0)$:
$$
\iint_U\rho'_0(\tilde\sigma({\bf u},\tau))\|N^h_{\tilde\sigma}({\bf u})\|dudv=|U|^{-1}\iint_U dudv=1.
$$
Hence,
\begin{eqnarray*}
 {\rm Mod}(f_0(\Sigma_0))&\le&\iiint_{\Omega'}(\rho_0'(q))^{4/3}d\mathcal{L}^3(q)=\iiint_\Omega(\rho_0'(f_0(p)))^{4/3}J_{f_0}(p)d\mathcal{L}^3(p)\\
&=&\int_I\iint_U(\rho_0'(\tilde\sigma({\bf u},\tau)))^{4/3}J_{f_0}(\tilde\sigma({\bf u},\tau))\|N^h_\sigma({\bf u})\|^{4/3}dudvd\mu(\tau)\\
&=&\int_I\iint_U|U|^{-4/3}K_{f_0}^{2/3}(\tau)dudvd\mu(\tau)=|U|^{1/3}\int_J K_{f_0}^{2/3}(\tau)d\mu(\tau).
\end{eqnarray*}
We show finally that
$$
{\rm Mod}(f_0(\Sigma_0))=\iiint_\Omega K_{f_0}^{2/3}(p)\rho_0^{4/3}(p)d\calL^3(p)
$$
by explicitly calculating the integral $I$ on the right:
\begin{eqnarray*}
I&=&|U|^{-4/3}\iiint_{U\times J}K_{f_0}^{2/3}(\tau)\|N^h_{\sigma_\tau}(u,v)\|^{-4/3}\cdot\|N^h_{\sigma_\tau}(u,v)\|^{-4/3}dudvd\mu(\tau)\\
&=&|U|^{-1/3}\int_JK_{f_0}^{2/3}(\tau)d\mu(\tau)\\
&=&{\rm Mod}(f_0(\Sigma_0)).
\end{eqnarray*}
\end{proof}

\medskip

\noindent{\it Proof of Theorem \ref{thm:modmethod}}. 
Since $\rho_0$ is admissible for the larger family $\Sigma$ we have ${\rm Mod}(\Sigma)={\rm Mod}(\Sigma_0)$. From our assumption ${\rm Mod}(f_0(\Sigma_0))\le{\rm Mod}(f(\Sigma))$ for each $f\in\mathcal{F}$ we have
$$
\fM_{2/3}(f_0,\rho_0)=\frac{{\rm Mod}(f_0(\Sigma_0))}{{\rm Mod}(\Sigma_0)}\le\frac{{\rm Mod}(f(\Sigma))}{{\rm Mod}(\Sigma)}\le\fM_{2/3}(f,\rho_0),
$$
where for the last inequality we have used the Modulus Inequality (\ref{modulusineq1}).
\hfill$\Box$

\section{Proof of the Main Theorem}\label{sec:main}
In this section we prove Theorem \ref{thm:main}. The proof relies heavily in the use of logarithmic coordinates for $\fH$; an overview is in Section \ref{sec:log}. We review in brief some known facts about the stretch map in Section \ref{sec:stretch}. Next, the proof is given in three steps. In Step 1 we calculate the modulus ${\rm Mod}(\Sigma_0)$ and the extremal density $\rho_0$ of our prototype family $\Sigma_0$ which consists of pieces of Heisenberg cones lying inside the spherical ring $S_{a,b}$. We do this using Lemma \ref{lem:mod1}, the foliation here is given by the logarithmic coordinates parametrisation $\Phi$. In the second step of the proof we show that $f_k$ satisfies the assumptions of Lemma \ref{lem:mod2} and in order to apply Theorem \ref{thm:modmethod} and conclude the proof, we show in the third step that there exists a  wider surface family $\Sigma$ inside $S_{a,b}$ such that $rho_0\in{\rm Adm}(\Sigma)$ and $f_k(\Sigma_0)\subseteq f(\Sigma)$ for each $f\in\mathcal{F}'$.

\subsection{Logarithmic coordinates}\label{sec:log}
Details about this section can be found in \cite{P} as well as in \cite{BFP1}. Logarithmic coordinates for $\fH$ are directly analogous to logarithmic coordinates in the complex plane. These are given by the map $\Phi:{\widetilde \fH}_0=\R\times (-\pi/2,\pi/2)\times\R\to\fH\setminus\mathcal{V}={\widetilde \fH}$ where $\mathcal{V}$ is the vertical axis, by the relation
\begin{equation}\label{eq:coords}
\Phi(\xi,\psi,\eta)=\left(i\cos^{1/2}\psi e^{\frac{\xi+i(\psi-3\eta)}{2}},-\sin\psi e^\xi\right).
\end{equation}
We state in brief some known facts about logarithmic coordinates. In the following, if $f:Q\to{\widetilde \fH}$ is a $\calC^k$ map, where $Q$ is a simply connected subset of ${\widetilde \fH}$ and $k\ge 0$, then $\tilde f$ will denote the map defined by the relation $\Phi\circ f=\tilde f\circ\Phi$. We shall write
$$
\tilde f(\xi,\psi,\eta)=(\Xi(\xi,\psi,\eta),\Psi(\xi,\psi,\eta),{\rm H}(\xi,\psi,\eta)).
$$
The contact form $\omega$ has the following expression in logarithmic coordinates:
$$
\omega=-e^\xi(\sin\psi d\xi+3\cos\psi d\eta),
$$
and the contact conditions (\ref{eq:C1}), (\ref{eq:C2}) and (\ref{eq:C3}) for a contact $f$ are equivalent to  the following conditions for $\tilde f$:
\begin{eqnarray}
&&\label{eq:contlog1}
{\rm H}_\psi+\frac{1}{3}\tan\Psi\cdot\Xi_\psi=0,\\
&&\label{eq:contlog2}
W_{\xi,\eta}{\rm H}+\frac{1}{3}\tan\Psi\cdot W_{\xi,\eta}\Xi=0,
\end{eqnarray}
where $W_{\xi,\eta}=\partial_\xi-(\tan\psi/3)\partial_\eta$.  We wish to stress at this point that 
$$
\Phi_*(W_{\xi,\eta})=\Re\left(\frac{|z|^2-it}{\overline{z}}Z\right),\quad\text{and}\quad \Phi_*(\partial_\psi)=\Im\left(\frac{|z|^2-it}{\overline{z}}Z\right),
$$
generate the horizontal tangent space at each point of $\widetilde{\fH}$.

Finally, the Beltrami coefficient for $f$ is
$$
\mu_f(\Phi(\xi,\psi,\eta))=-e^{3i(\psi-\eta)}\frac{{\overline W}(\Xi+i\Psi)}{W(\Xi+i\Psi)},
$$
where $W=W_{\xi,\eta}-i\partial_\psi$ and $\overline{W}=W_{\xi,\eta}+i\partial_\psi$.


\subsection{The stretch map}\label{sec:stretch}

The stretch map $f_k\fH\to\fH$, $k\in\R$,  $k\neq 0,1,$ is defined in logarithmic coordinates by
$$
\tilde{f}_k(\xi,\psi,\eta)=\left(k\xi,\tan^{-1}\left(\frac{\tan\psi}{k}\right),\eta\right).
$$
In \cite{BFP1} it is shown that $\tilde{f}_k$ satisfies the contact conditions (\ref{eq:contlog1}) and (\ref{eq:contlog2}) and that the Beltrami coefficient of $f_k$ is
$$
\mu_{f_k}(\Phi(\xi,\psi,\eta))=-e^{3i(\psi-\eta)}\frac{k^2-1}{k^2+1+\tan^2\psi}.
$$
Since $\|\mu_{f_k}\|_\infty<1$,  $f_k$ is quasiconformal in $\fH$. In the following, the domain of $f_k$ will be the spherical ring $S_{a,b}=\{(z,t)\in\fH\;:\;a^4<|z|^4+t^2<b^4\}$ and we  shall consider the  parametrisation of $S_{a,b}\setminus\mathcal{V}$  by logarithmic coordinates
\begin{equation*}\label{ring-parameters}
 (\xi,\psi,\eta)\mapsto\Phi(\xi,\psi,\eta),
\end{equation*}
where $(\xi,\psi,\eta)\in(2\log a,2\log b)\times(-\pi/2,\pi/2)\times (-2\pi/3,2\pi/3)$. Since the Jacobian determinant of $\Phi$ equals to $J_\Phi=-(3/4)e^{2\xi}$,  the Lebesgue mesure in $S_{a,b}$ is
$$
d\mathcal{L}^3(\xi,\psi,\eta)=\frac{3}{4}e^{2\xi}d\xi d\psi d\eta.
$$

\medskip

We are now set for the proof of our main theorem.

\medskip

\noindent{\it Proof of Theorem \ref{thm:main}}. The proof shall be given in steps.


\subsubsection*{\bf{Step 1: Modulus of $\Sigma_0$}}\label{sec:step1}
Our  prototype family will be the  foliation $\Sigma_0$ of the spherical ring $S_{a,b}$ by the pieces of Heisenberg cones $\mathcal{C}_\alpha$ lying inside $S_{a,b}$; these have cartesian equations
$$
t=-\alpha(x^2+y^2),\quad \alpha\in\R,\quad a^2<x^2+y^2<b^2.
$$
In logarithmic coordinates those equations are $\tan\psi=\alpha$ where
$\psi\in$ $(-\pi/2,\pi/2)$ and thus a cone $\mathcal{C}_\alpha$  may be parametrised by the single patch 
$$
\sigma_\psi(\xi,\eta)=\Phi(\xi,\psi,\eta),\quad (\xi,\eta)\in U=(2\log a,2\log b)\times(-2\pi/3,2\pi/3).
$$
At each leaf $\calC_\alpha$ of the foliation $\Sigma_0$, the chart
$
\left(U,\sigma_\psi\right)
$
is  a $\calC^2$ chart in the atlas of the leaf. For the horizontal normal $N^h_{\sigma_\psi}$ we calculate
\begin{eqnarray*}
&&
 \frac{\partial(x,y)}{\partial(\xi,\eta)}=-\frac{3}{4}e^\xi\cos\psi,\\
&&
\frac{\partial(y,t)}{\partial(\xi,\eta)}=-\frac{3}{2}e^{3\xi/2}\cos^{1/2}\psi\sin\psi\sin((3\eta-\psi)/2),\\
&&
\frac{\partial(t,x)}{\partial(\xi,\eta)}=-\frac{3}{2}e^{3\xi/2}\cos^{1/2}\psi\sin\psi\cos((3\eta-\psi)/2).
\end{eqnarray*}
Therefore
\begin{eqnarray}
 &&
\label{hornorpsi}
 N^h_{\sigma_\psi}(\xi,\eta)=-\frac{3}{2}e^{3\xi/2}\cos^{1/2}\psi\left(\cos(3(\eta-\psi)/2)X-\sin(3(\eta-\psi)/2)Y\right),\\
&&\label{horareapsi}
 \|N^h_{\sigma_\psi}(\xi,\eta)\|=\frac{3}{2}e^{3\xi/2}\cos^{1/2}\psi.
\end{eqnarray}

\medskip

We can now use Lemma \ref{lem:mod1} to calculate the extremal density and the modulus of $\Sigma_0$.

\medskip
\begin{prop}\label{moduluscones}
 Let $\Sigma_0$ be the family of pieces of Heisenberg cones $\mathcal{C}_\alpha$  inside the spherical ring $S_{a,b}$. Then
\begin{equation}\label{mod-cones}
{\rm Mod}(\Sigma_0)=\left(2^5\pi\log(b/a)\right)^{-1/3}{\rm B}(1/2,1/6),
\end{equation}
where ${\rm B}$ denotes the beta function. The extremal density $\rho_0$ is  given by
\begin{equation}\label{cone-density}
 \rho_0(z,t)=\frac{1}{4\pi\log(b/a)}\cdot\frac{1}{|z|\left||z|^2-it\right|}\mathcal{X}(S_{a,b}).
\end{equation}
\end{prop}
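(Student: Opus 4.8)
The plan is to apply Lemma~\ref{lem:mod1} directly to the cone foliation, so the main task is to verify its three hypotheses and then evaluate the resulting integral. First I would set up the foliation parameter as $\tau=\psi\in J=(-\pi/2,\pi/2)$, with the base set $U=(2\log a,2\log b)\times(-2\pi/3,2\pi/3)$ and surface patches $\sigma_\psi(\xi,\eta)=\Phi(\xi,\psi,\eta)$ as already fixed above. Hypothesis (1), regularity of each $\sigma_\psi$, and hypothesis (2), that $N^h_{\sigma_\psi}\neq 0$ a.e., are immediate from the explicit formula \eqref{horareapsi}: since $\|N^h_{\sigma_\psi}\|=\tfrac32 e^{3\xi/2}\cos^{1/2}\psi$ vanishes only at $\psi=\pm\pi/2$, which is excluded from the open interval $J$, the horizontal normal is nowhere zero on each leaf.

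The crux is hypothesis (3), the measure decomposition. I would start from the Lebesgue measure in logarithmic coordinates, $d\mathcal{L}^3=\tfrac34 e^{2\xi}\,d\xi\,d\psi\,d\eta$, and match it against the required form $\|N^h_{\sigma_\psi}\|^{4/3}\,d\xi\,d\eta\,d\mu(\psi)$. Using \eqref{horareapsi} one computes
\[
\|N^h_{\sigma_\psi}\|^{4/3}=\left(\tfrac32\right)^{4/3}e^{2\xi}\cos^{2/3}\psi,
\]
so that $\tfrac34 e^{2\xi}=\|N^h_{\sigma_\psi}\|^{4/3}\cdot\left(\tfrac34\,(\tfrac32)^{-4/3}\cos^{-2/3}\psi\right)$, which forces
\[
d\mu(\psi)=\tfrac34\left(\tfrac32\right)^{-4/3}\cos^{-2/3}\psi\,d\psi.
\]
The key point is that the $e^{2\xi}$ factors cancel exactly — this is why the cone foliation is adapted to the modulus — leaving a measure $d\mu$ depending only on $\psi$, as the lemma demands. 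With this in hand, Lemma~\ref{lem:mod1} immediately gives the extremal density $\rho_0(p)=(|U|\cdot\|N^h_{\sigma_\psi}\|)^{-1}$ and the modulus formula ${\rm Mod}(\Sigma_0)=|U|^{-1/3}\int_J d\mu(\psi)$.

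It then remains to carry out two explicit computations. For the density, I would compute $|U|={\rm Area}(U)=2\log(b/a)\cdot\tfrac{4\pi}{3}$ and rewrite $\|N^h_{\sigma_\psi}\|=\tfrac32 e^{3\xi/2}\cos^{1/2}\psi$ back in cartesian terms; since $|z|^2=\cos\psi\,e^\xi$ and $\big||z|^2-it\big|=e^\xi$, one finds $|z|\cdot\big||z|^2-it\big|=\cos^{1/2}\psi\,e^{3\xi/2}$, so that $\|N^h_{\sigma_\psi}\|=\tfrac32\,|z|\,\big||z|^2-it\big|$, and substituting yields exactly \eqref{cone-density}. For the modulus, the integral $\int_{-\pi/2}^{\pi/2}\cos^{-2/3}\psi\,d\psi$ is a standard Beta-function integral equal to ${\rm B}(1/2,1/6)$; combining this with the $|U|^{-1/3}$ factor and the numerical constants $\tfrac34(\tfrac32)^{-4/3}$ and simplifying the powers of $2$, $3$ and $\pi$ should collapse to the stated $\left(2^5\pi\log(b/a)\right)^{-1/3}{\rm B}(1/2,1/6)$. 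The main obstacle I anticipate is purely bookkeeping: tracking the fractional powers of $2$ and $3$ through the decomposition so that the constant lands precisely as written, and correctly identifying the Beta integral with its half-angle/power-of-cosine normalization.
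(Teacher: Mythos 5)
Your proposal is correct and follows essentially the same route as the paper: both apply Lemma~\ref{lem:mod1} to the foliation $\sigma_\psi(\xi,\eta)=\Phi(\xi,\psi,\eta)$ with $U=(2\log a,2\log b)\times(-2\pi/3,2\pi/3)$, $J=(-\pi/2,\pi/2)$, and the decomposition $d\mathcal{L}^3=\|N^h_{\sigma_\psi}\|^{4/3}\,d\xi\,d\eta\,d\mu(\psi)$, your constant $\tfrac34\left(\tfrac32\right)^{-4/3}$ agreeing with the paper's $\tfrac12\left(\tfrac32\right)^{-1/3}$. The remaining computations you outline (the identity $|z|\bigl||z|^2-it\bigr|=\cos^{1/2}\psi\,e^{3\xi/2}$, $|U|=\tfrac{8\pi}{3}\log(b/a)$, and $\int_{-\pi/2}^{\pi/2}\cos^{-2/3}\psi\,d\psi={\rm B}(1/2,1/6)$) do yield exactly \eqref{cone-density} and \eqref{mod-cones}, which is all the paper's ``short calculations'' amount to.
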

\begin{proof}
We  apply Lemma \ref{lem:mod1}.  Here:
\begin{enumerate}
\item $U=(2\log a,2\log b)\times(-2\pi/3,2\pi/3)$, $J=(-\pi/2,\pi/2)$,
$
\Phi((\xi,\eta),\psi)=\Phi(\xi,\psi,\eta)
$
and
\item $\Sigma_0$ is the family $\sigma_\psi(\xi,\eta)=\Phi((\xi,\eta),\psi)$.
\end{enumerate}
From (\ref{horareapsi}) we have $\|N^h_{\sigma_\psi}(\xi,\eta)\|=\frac{3}{2}e^{3\xi/2}\cos^{1/2}\psi$ and also
$$
d\mathcal{L}^3\left(\Phi\left((\xi,\eta),\psi)\right)\right)=\|N^h_{\sigma_\psi}(\xi,\eta)\|^{4/3}d\xi d\eta d\mu(\psi),
$$
where
$$
d\mu(\psi)=\frac{1}{2}\left(\frac{3}{2}\right)^{-1/3}\cos^{-2/3}\psi d\psi,\quad \psi\in(-\pi/2,\pi/2).
$$
Formulae (\ref{mod-cones}) and (\ref{cone-density}) follow immediately after short calculations.
\end{proof}

\subsubsection*{\bf{Step 2: Modulus of $f_k(\Sigma_0)$}}

The stretch map has the mSP for the  family of cones $\Sigma_0$. Indeed, from equation (\ref{hornorpsi}) we have
$$
m=m_{\sigma_\psi}=-\frac{3}{2}e^{3\xi/2}\cos^{1/2}\psi e^{3i(\psi-\eta)/2},\quad \arg(m)=\pi+\frac{3}{2}(\psi-\eta).
$$
Therefore
\begin{eqnarray*}
 \mu_{f_k}e^{-2i\arg(m)}&=&\frac{1-k^2}{k^2+1+\tan^2\psi},
\end{eqnarray*}
which is positive if $k\in(0,1)$. Thus the distortion function $K_{f_k}$ is
$$
K_{f_k}(\Phi(\xi,\psi,\eta))=\frac{1+\tan^2\psi}{k^2+\tan^2\psi}
$$
and it is constant in each leaf of $\Sigma_0$ since it only depends on $\psi$. By Lemma \ref{lem:mod2} we have
\begin{eqnarray*}
{\rm Mod}(f_k(\Sigma_0))&=&\left(\frac{8\pi}{3}\log(b/a)\right)^{-1/3}\int_{-\pi/2}^{\pi/2}\left(\frac{1+\tan^2\psi}{k^2+\tan^2\psi}\right)^{2/3}\cdot \frac{1}{2}\left(\frac{3}{2}\right)^{-1/3}\cos^{-2/3}\psi d\psi\\
&=&\left(2^5\pi\log(b/a)\right)^{-1/3}\int_{-\pi/2}^{\pi/2}\frac{1+\tan^2\psi}{(k^2+\tan^2\psi)^{2/3}}d\psi\\
(z=\tan\psi)\;&=&\left(2^5\pi\log(b/a)\right)^{-1/3}\int_{-\infty}^{+\infty}\frac{dz}{(k^2+z^2)^{2/3}}\\
(u=z/k)\;&=&k^{-1/3}\cdot\left(2^5\pi\log(b/a)\right)^{-1/3}\int_{-\infty}^{+\infty}\frac{du}{(1+u^2)^{2/3}}\\
(v=\arctan u)\;&=&k^{-1/3}\cdot\left(2^5\pi\log(b/a)\right)^{-1/3}\int_{-\pi/2}^{\pi/2}\cos^{-2/3}vdv\\
&=&k^{-1/3}\cdot{\rm Mod}(\Sigma_0).
\end{eqnarray*}
Note that this equality could have been immediately deduced by the fact that $f_k$ maps Heisenberg cones to Heisenberg cones and the formula for the modulus of the family of Heisenberg cones inside a spherical ring. On the other hand, if $\rho_0$ is given by (\ref{cone-density}), then again by Lemma \ref{lem:mod2} we have
\begin{eqnarray*}&&
 \iiint_{S_{a,b}}K_{f_k}^{2/3}(p)\rho_0^{4/3}(p)d\mathcal{L}^3(p)=
{\rm Mod}(f_k(\Sigma_0)).
\end{eqnarray*}

\subsubsection*{\bf{Step 3: Conclusion of Proof}}
To conclude the proof of Theorem \ref{thm:main} we must define the larger family $\Sigma$ and show that $
 f_k(\Sigma_0)\subseteq f(\Sigma)
 $ and  that $\rho_0$ is admissible for $\Sigma$. A natural choice is the one of the family of regular surfaces that  join the two boundaries of the spherical ring. But this is a rather large family and it contains surfaces which might not at all look like  Heisenberg cones, even locally. To that end, we examine the particular properties of Heisenberg cones. Let $\calC$ be such a cone inside $S_{a,b}$ and let $\widetilde{\calC}$ be $\Phi^{-1}(\calC)$. Then the following hold.
 \begin{enumerate}
 \item The surface $\widetilde{\calC}$ admits a parametrisation
 $$
 {\widetilde \sigma}(\xi,\eta)=\left(\Xi(\xi,\eta),\Psi(\xi,\eta),{\rm H}(\xi,\eta)\right)=(\xi,c,\eta), \quad (\xi,\eta)\in(2\log a, 2\log b)\times(-2\pi/3,2\pi/3),
 $$
 that is, $\psi=\psi(\xi,\eta)=c$.
 \item The induced 1-form $\omega_\calC$ is given in logarithmic coordinates by
 $$
 \omega_{\widetilde{\calC}}=\Phi^{*}(\omega_\calC)=-e^\xi\cos c(\tan cd\xi+3d\eta)
 $$
 and therefore $\ker\;\omega_{\calC}$ is generated by $ \partial_\xi-(\tan c /3)\partial_\eta$ which may be identified to the restriction of  
$\Phi_*(W_{\Xi,{\rm H}})$ on $\calC$:
$
{\widetilde \sigma}_*(\partial_\xi-(\tan c /3)\partial_\eta)=W_{\Xi,{\rm H}}.
$
 Note that this also implies that 
the restriction of the vector field $\Phi_*(\partial/\partial\Psi)$ on $\calC$ is horizontally  transverse to the horizontal space of $\calC$. 
 \end{enumerate}
 One may now check that a surface $\E$ which is such that $\widetilde{\E}=\Phi^{-1}(\E)$ admits a local parametrisation
 $$
{\widetilde \sigma}(\xi,\eta)=\left(\Xi(\xi,\eta),\Psi(\xi,\eta),{\rm H}(\xi,\eta)\right), \quad (\xi,\eta)\in U=(2\log a,2\log b)\times (-2\pi/3,2\pi/3),
$$
 has the property that $\ker\;\omega_\E$ is generated by a vector field which can be identified to $\Phi_*(V)$ where
\begin{equation}\label{eq:V}
V=\frac{\partial(\Xi,{\rm H})}{\partial(\xi,\eta)}W_{\Xi,{\rm H}}+3\left(\frac{\partial(\Psi,{\rm H})}{\partial(\xi,\eta)}-\frac{\tan\Psi}{3}\frac{\partial(\Xi,\Psi)}{\partial(\xi,\eta)}\frac{\partial}{\partial\Psi}\right).
\end{equation}
Note that $\partial(\Xi,{\rm H})/\partial(\xi,\eta)\neq 0$ allow us to reparametrise so that $\widetilde C$ is locally of the form $\Psi=\Psi(\xi,\eta)$.
 
 \medskip
 
The previous observations drive us to impose the following additional condition (C) for $\Sigma$:

\medskip

(C) {\it  Every $\calS\in\Sigma$ is such that $\widetilde{\E}$ can be locally parametrised away from its characteristic locus as}
$$
\Psi=\Psi(\xi,\eta),\quad (\xi,\eta)\in U=(2\log a,2\log b)\times (-2\pi/3,2\pi/3).
$$

\medskip

 In the first place, if $\Sigma$ satisfies (C) then for any arbitrary contact quasiconformal map  $f\in\mathcal F$ we have
 $
 f_k(\Sigma_0)\subseteq f(\Sigma).
 $
 To see this, suppose  that we have a Heisenberg cone $\calC\in f_k(\Sigma_0)$ where $\widetilde{\calC}$ is given by $\psi=c$ and an arbitrary contact qc map $f\in\mathcal{F}$. The surface  $\calC'=f^{-1}(\calC)$ admits a local parametrisation
$$
(\xi,\eta)\mapsto\left(\Xi(\xi,c,\eta),\Psi(\xi,c,\eta),{\rm H}(\xi,c,\eta)\right)
$$ 
where  $(\Xi,\Psi,{\rm H})={\tilde f}^{-1}$ and thus its horizontal tangent space is generated by $V$ as in (\ref{eq:V}). Suppose that $\partial(\Xi,{\rm H})=0$ in the neighborhood of some point.  From the contact conditions for $f^{-1}$ we find
 $$
 \tan\Psi\Xi_\xi+3{\rm H}_\xi=\frac{\tan c}{3}(\tan\Psi\Xi_\eta+3{\rm H}_\eta) 
 $$
hence
$$
\frac{\tan\Psi}{3}=\frac{\tan c{\rm H}_\eta-3{\rm H}_\xi}{3\Xi_\xi-\tan c\Xi_\eta}.
$$
The denominator is different from zero; else, we would have a quasilinear PDE $3\Xi_\xi-\tan c\Xi_\eta=0$ whose solution is $\Xi=$const. From the contact condition it also follows that ${\rm H}=$const. and we cannot have a surface.

Now we calculate:
\begin{eqnarray*}
 \frac{\partial(\Psi,{\rm H})}{\partial(\xi,\eta)}-\frac{\tan\Psi}{3}\frac{\partial(\Xi,\Psi)}{\partial(\xi,\eta)}&=&\Psi_\xi{\rm H}_\eta-\Psi_\eta{\rm H}_\xi-\frac{\tan\Psi}{3}\cdot(\Xi_\xi\Psi_\eta-\Psi_\xi\Xi_\eta)\\
&=&\frac{(\Psi_\xi{\rm H}_\eta-\Psi_\eta{\rm H}_\xi)(3\Xi_\xi-\tan c\Xi_\eta)-(\Xi_\xi\Psi_\eta-\Psi_\xi\Xi_\eta)(\tan c{\rm H}_\eta-3{\rm H}_\xi)}{3\Xi_\xi-\tan c\Xi_\eta}\\
&=&\frac{3\Psi_\xi-\tan c\Psi_\eta}{3\Xi_\xi-\tan c\Xi_\eta}\cdot\frac{\partial(\Xi,{\rm H})}{\partial(\xi,\eta)}=0.
\end{eqnarray*}
Thus $V=0$ which is a contradiction since we are away from the characteristic locus. Therefore $\calC'\in\Sigma$ which  implies that $\calC=f(\calC')\in f(\Sigma)$.

\medskip

We finally show that $\rho_0\in{\rm Adm}(\Sigma)$. For this, let $\E\in\Sigma$ and we may suppose that it is parametrised  by the single surface patch
$$
\sigma(\xi,\eta)=\left(z(\xi,\eta),t(\xi,\eta)\right)=\left(i\cos^{1/2}(\psi(\xi,\eta))
e^{\frac{\xi+i(\psi(\xi,\eta)-3\eta)}{2}},-e^\xi\sin\psi(\xi,\eta)\right).
$$
This assumption is allowed by our condition (C) and the fact that the horizontal area integral does not depend on the choice of coordinates, see Section \ref{sec:horarea}. We have
\begin{eqnarray*}
 &&
 z_\xi=\frac{z}{2}\cdot \left((1-\tan\psi\cdot\psi_\xi)+i\psi_\xi\right),\quad
  z_\eta=\frac{z}{2}\cdot \left(-\tan\psi\cdot\psi_\eta)+i(\psi_\eta-3)\right),\\
  &&
  t_\xi=-e^{\xi}\left(\sin\psi+\cos\psi\cdot\psi_\xi\right),\quad
  t_\eta=-e^\xi\cos\psi\cdot\psi_\eta.
\end{eqnarray*}
Therefore
\begin{eqnarray*}
 \sigma_\xi=z_\xi Z+\overline{z_\xi}\overline{Z}-a T,\\
 \sigma_\eta=z_\eta Z+\overline{z_\eta}\overline{Z}-b  T,
\end{eqnarray*}
where
$
a=e^\xi\sin\psi$, $b=3e^\xi\cos\psi.
$
Thus
\begin{eqnarray*}
 N_\sigma^h=(\sigma_\xi\wedge^\fH \sigma_\eta)^h=i\left((bz_\xi-az_\eta)Z-(b\overline{z_\xi}-a\overline{z_\eta})\overline{Z}\right).
\end{eqnarray*}
Straightforward calculations now yield
\begin{eqnarray*}
 \|N_\sigma^h\|&=&|bz_\xi-az_\eta|\\
 &=&\frac{3}{2}e^{3\xi/2}\cos^{1/2}\psi\left(1+\left(\psi_\xi-\frac{\tan\psi\cdot\psi_\eta}{3}\right)^2\right)^{1/2}.
\end{eqnarray*}
Hence
\begin{eqnarray*}
 \iint_\E\rho_0d\E^h&=&\iint_U\rho_0 \|N_\sigma^h(\xi,\eta)\|d\xi d\eta\\
 &=&\frac{1}{4\pi\log(b/a)}\iint_Ue^{-3\xi/2}\cos^{-1/2}\psi\cdot\frac{3}{2}e^{3\xi/2}\cos^{1/2}\psi\times\\
&&\times\left(1+\left(\psi_\xi-\frac{\tan\psi\cdot\psi_\eta}{3}\right)^2\right)^{1/2}d\xi d\eta\\
 &=&\frac{3}{8\pi\log(b/a)}\int_{2\log a}^{2\log b}\int_{-2\pi/3}^{2\pi/3}\left(1+\left(\psi_\xi-\frac{\tan\psi\cdot\psi_\eta}{3}\right)^2\right)^{1/2}d\xi d\eta\\
 &\ge &\frac{3}{8\pi\log(b/a)}\int_{2\log a}^{2\log b}\int_{-2\pi/3}^{2\pi/3} d\xi d\eta\\
 &=&1.
\end{eqnarray*}
The proof of Theorem \ref{thm:main} concludes here. \hfill$\Box$

\end{document}